\theoremstyle{plain} \numberwithin{equation}{section}
\newtheorem{theorem}{Theorem}[section]
\newtheorem{corollary}[theorem]{Corollary}
\newtheorem{conjecture}[theorem]{Conjecture}
\newtheorem{lemma}[theorem]{Lemma}
\newtheorem{proposition}[theorem]{Proposition}
\theoremstyle{definition}
\newtheorem{definition}[theorem]{Definition}
\newtheorem{remark}[theorem]{Remark}
\newtheorem{example}[theorem]{Example}
\newcommand{\link}{\textup{link}}
\newcommand{\del}{\textup{del}}
\newcommand{\ds}{\displaystyle}
\newcommand{\redHo}{\widetilde{\textup{H}}}
\newcommand{\ld}{\lessdot}
\newcommand{\smsm}{\smallsetminus}
\newcommand{\ra}{\rightarrow}
\newcommand{\lra}{\longrightarrow}
\newcommand{\mbf}{\mathbb}
\newcommand{\Ho}{\widetilde{H}}
\newcommand{\rank}{{\mathrm{rank}}}
\newcommand{\LN}{\mathcal{L}(n)}
\newcommand{\cl}[1]{\lceil {#1}\rceil}
\DeclareMathOperator{\maxRanked}{MR  }
\DeclareMathOperator{\LCM}{LCM}
\DeclareMathOperator{\image}{im }
\DeclareMathOperator{\Star}{star }
\title{The Betti poset in monomial resolutions}
\author{Timothy Clark and Sonja Mapes}
\begin{document}
\maketitle

\begin{abstract}
Let $P$ be a finite partially ordered set with unique minimal 
element $\hat{0}$. We study the Betti poset of $P$, created 
by deleting elements $q\in P$ for which the open interval 
$(\hat{0}, q)$ is acyclic. Using basic simplicial topology, we demonstrate an isomorphism 
in homology between open intervals of the form $(\hat{0},p)\subset P$ 
and corresponding open intervals in the Betti poset.
Our motivating application is that the Betti poset of a monomial ideal's lcm-lattice 
encodes both its $\mathbb{Z}^{d}$-graded 
Betti numbers and the structure of its minimal free resolution. 
In the case of rigid monomial ideals, we use the data of the Betti poset to 
explicitly construct the minimal free resolution. Subsequently, we introduce 
the notion of rigid deformation, a generalization of 
Bayer, Peeva, and Sturmfels' generic deformation. 
\end{abstract}

\section*{Introduction} 

There has been a great deal of work on the problem, posed by Kaplansky
in the early 1960s, of constructing a minimal free resolution of a monomial
ideal $M$ in a polynomial ring $R$.  While there are several methods for
computing the Betti numbers, 
a method for constructing the maps in a
minimal resolution remains elusive except for specific classes of
ideals.  

Our contributions to this problem are as follows.  In
Theorem \ref{BettiSupportsRigid}, we give a construction for the minimal free resolution
of a rigid monomial ideal.  We follow this by defining \emph{rigid deformations}, which leverage 
this construction to give minimal free resolutions of non-rigid monomial ideals. 
This program is easily shown successful in the case of simplicial resolutions, 
which we verify in Theorem \ref{simplicialRigidDefo}.  Combined with knowledge 
of join-preserving maps between finite atomic lattices, this 
gives a structural understanding of the pathways and obstructions to minimally resolving 
monomial ideals by discerning the resolution structure of combinatorially similar ideals. 

Our overal program mirrors that of generic monomial ideals
\cite{BPS,MSY}, which are known to have their
minimal resolutions supported on the Scarf simplicial complex $\Delta_M$. 
In this case, the maps in the minimal resolution are formed by 
$\mathbb{Z}^d$-grading the maps in the algebraic chain complex of $\Delta_M$. 
Bayer, Peeva, and Sturmfels \cite{BPS} also introduce the notion of generic deformation of exponents, 
which gives a (generally non-minimal) simplicial resolution for every ideal, 
supported on the Scarf complex of the target generic ideal. 
The benefit of our approach is an ability to describe (and in turn deform to) 
resolutions whose structure is more complicated than the simplicial resolutions of generic ideals. 

The second author establishes the motivation for our approach by 
showing in \cite{mapes} that generic deformation has blind spots. First,  
a poor choice of deformed exponents can result in a generic ideal whose minimal resolution 
is as large as possible -- the Taylor complex \cite{Taylor}. Moreover, there exist finite 
atomic lattices whose ideals have simplicial resolutions, but are not the result of any 
generic deformation. In particular, there exist even monomial ideals of projective dimension two 
whose minimal resolutions have a very basic simplicial structure, but which cannot be attained by the 
process of generic deformation. Our approach overcomes these deficiencies by taking advantage 
of the lattice structure of the set of finite atomic lattices on a fixed number of atoms due to Phan\cite{phan}. 
In particular, we investigate the role that rigid ideals take in this lattice. 

The main tool in our description of the minimal resolution of rigid ideals is the order-theoretic perspective 
taken by the first author in \cite{Clark}. The technique presented therein creates a complex of vector 
spaces from the homology of intervals in a poset 
and yields minimal resolutions in some cases. It recovers the resolutions given by several 
known construction methods as being supported on a relevant poset, including the Scarf resolution for generic ideals. 

To describe the resolution structure of a rigid ideal, we identify the combinatorial object which encodes the unique 
$\mathbb{Z}^d$-graded bases in a rigid ideal's minimal resolution, extending our initial results \cite{CMRigid} on 
rigid ideals. Where the Scarf complex is the unique object supporting the minimal resolution of 
a generic ideal, the \emph{Betti poset} plays the same role for a rigid ideal. The Betti poset is the 
subposet of an ideal's lcm-lattice which does not contain the homologically irrelevant data present in the lcm-lattice. 
We provided initial examples of the utility of the Betti poset in \cite{CMRigid}, and this object has been subsequently 
studied in \cite{TchernevVarisco} using techniques of category theory. 

Fundamental progress comes in Section \ref{S:BettiPoset} by proving general statements about the Betti poset of a poset 
$P$. Specifically, Theorem \ref{removeNC} uses classical techniques from 
simplicial homology to establish an isomorphism between the homology of certain open intervals 
of a poset $P$ and those analogous intervals appearing in its Betti poset.  

In Section \ref{S:BettiOfLCM}, we turn our attention to the lcm-lattice of a monomial ideal $M$ 
and its Betti poset $B_M$. First, we establish Theorem \ref{BettiMFR} by using basic relabeling 
techniques to show that the 
isomorphism class of $B_M$ completely determines its minimal free resolution. 
Our proof avoids the functorial techniques of \cite{TchernevVarisco}.  

Our main result is Theorem \ref{BettiSupportsRigid}, which states that when an 
ideal is rigid, the Betti poset of its lcm-lattice supports the minimal free resolution. 
Section \ref{S:MinResRigid} contains the technical proof of Theorem \ref{BettiSupportsRigid}. 

With results on rigid ideals in hand, we introduce the notion of rigid deformation by appealing to the 
structure of $\LN$, the lattice of finite atomic lattices introduced by Phan \cite{phan}. Indeed, 
as a corollary to Theorem \ref{BettiSupportsRigid}, we see that for every monomial ideal $M$ 
whose lcm-lattice is comparable in $\LN$ to that of a rigid ideal, the Betti poset of the rigid 
ideal supports the minimal resolution of any monomial ideal whose Betti poset is isomorphic to $B_M$. 
This generalizes the equivalent Theorem \ref{BettiMFR} and the results of \cite{TchernevVarisco}. 
For a non-rigid ideal, there may be several rigid ideals whose Betti poset supports a minimal resolution. 
Conversely, we present an example of a non-rigid ideal which admits no rigid deformation. 

\begin{comment}
\color{green!70!black}
from complementary perspectives: unique mfr, appropriate supporting poset structure, and 
their existence as a target for deformations. 

We view $\LN$ as a map of a continent, Betti number strata as countries within the continent, 
Betti lattice strata as states within the countries, individual finite atomic lattices as cities, and 
comparability of elements as roads between cities. The rigid ideals are Rosetta Stone cities, 
which can be used to describe the resolutions of nearby lattices with more choices present in 
their resolutions......... my metaphor is breaking down. 
\color{black}
\end{comment}

Throughout the paper, all posets are finite with a unique minimal element $\hat{0}$. In order to focus attention 
on the homological properties of their order complexes, when considering the topological and homological 
properties of the order complex $\Delta(P)$ we write $P$ in its place when the context is clear. 

\section{Homological properties of the Betti poset}\label{S:BettiPoset}

In the category of simplicial complexes, the notion of vertex deletion is well-studied. We review 
the analogous concept for deleting elements from a poset $P$. Using this process, we show 
that under certain assumptions, deleting particular elements from a fixed open interval in a 
poset induces an isomorphism in homology. 

Let $p$ be an element of the poset $P$. We refer to the poset  
$P\smsm\{p\}$ as the \emph{deletion of $p$ from $P$} since 
in the order complex of (intervals of) $P$, this operation 
corresponds to the topological notion of vertex deletion. 
The idea of this procedure is related to the one described by 
Fl\"oystad \cite{Floystad1}. 

In what follows, we examine the reduced homology of order complexes of posets. For a poset $P$, we write 
$h_i=h_i(P)=\dim_\Bbbk\Ho_i(\Delta (P),\Bbbk)$ for the $\Bbbk$-vector space dimension of the homology of the 
order complex of the poset $P\smsm\{\hat{0}\}$.  
When $p \in P$, for notational simplicity we write $\Delta_p=\Delta_p^P=\Delta(\hat 0,p)$ 
for the order complex of the associated open interval in $P$. 
As is standard, we say that an element $x\in{P}$ is \emph{covered} by $y$ 
(which we write as $x\ld y$) when $x<y$ and there exists no $z\in{P}$ such that $x<z<y$. 
For those $p\in P$ with the property that $h_i({\Delta_p})=0$ for every $i$, we 
say that $p$ is a \emph{non-contributor} to the homological data of $P$. 

We now apply the notions of simplicial deletion, link, and star to the order complex of a poset.
Focusing on the order complex $\Delta_q$ and a poset element $p\in(\hat{0},q)$ recall 
$$\del_{\Delta_q}(p)=\{\sigma\in\Delta_q:p\notin\sigma\},$$  
$$\link_{\Delta_q}(p)=\{\sigma\in\Delta_q:\sigma\cup\{p\}\in\Delta_q\},\textup{ and}$$  
$$\Star_{\Delta_q}(p)=\{\sigma\in\Delta_q:p\in\sigma\}.$$  

Our interest in $\link_{\Delta_q}(p)$ comes from its relationship to
$\del_{\Delta_q}(p)$ and $\Star_{\Delta_q}(p)$, which will be of use in analyzing the 
homological data in $P$. 

When restricting to the subposet $(\hat{0},q)\subset P$, the complex $\link_{\Delta_q}(p)$ 
%in $(\hat{0},q)\subset P$ 
is the simplicial join $\Delta(0,p)*\Delta(p,q)$. Using this notion, we prove the following. 

\begin{lemma}\label{removeOne}
Let $P$ be a poset and fix $q \in P$.
If $p<q\in P$ has the property that $h_j({\Delta_p})=0$ for every $j$, 
then there is an isomorphism in homology 
$\Ho_i(\Delta_q,\Bbbk)\cong\Ho_i(\del_{\Delta_q}(p),\Bbbk)$.
\end{lemma}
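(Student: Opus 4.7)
The plan is to use a Mayer--Vietoris argument built on the standard decomposition of $\Delta_q$ by the star and deletion of the vertex $p$. Specifically, write
$$\Delta_q = \del_{\Delta_q}(p) \cup \Star_{\Delta_q}(p), \qquad \del_{\Delta_q}(p) \cap \Star_{\Delta_q}(p) = \link_{\Delta_q}(p).$$
This setting is a textbook Mayer--Vietoris situation for simplicial complexes, so it yields a long exact sequence in reduced homology relating $\widetilde{H}_\bullet(\Delta_q)$ to the reduced homology of the deletion, the star, and the link.

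Next, I would feed two vanishing results into this sequence. First, $\Star_{\Delta_q}(p)$ is a simplicial cone with apex $p$ and is therefore contractible, so $\widetilde{H}_i(\Star_{\Delta_q}(p)) = 0$ for all $i$. Second, as recalled in the paragraph just before the lemma, $\link_{\Delta_q}(p)$ is the simplicial join $\Delta_p * \Delta(p,q)$. By the Künneth-type formula for joins,
$$\widetilde{H}_n(\Delta_p * \Delta(p,q); \Bbbk) \;\cong\; \bigoplus_{i+j = n-1}\widetilde{H}_i(\Delta_p; \Bbbk) \otimes_\Bbbk \widetilde{H}_j(\Delta(p,q); \Bbbk),$$
and the hypothesis $h_j(\Delta_p) = 0$ for all $j$ forces every summand to vanish. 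Hence $\widetilde{H}_i(\link_{\Delta_q}(p)) = 0$ for all $i$ as well.

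Substituting these vanishings into the Mayer--Vietoris sequence
$$\cdots \to \widetilde{H}_i(\link_{\Delta_q}(p)) \to \widetilde{H}_i(\del_{\Delta_q}(p)) \oplus \widetilde{H}_i(\Star_{\Delta_q}(p)) \to \widetilde{H}_i(\Delta_q) \to \widetilde{H}_{i-1}(\link_{\Delta_q}(p)) \to \cdots$$
collapses the link and star terms to zero, so the connecting map is zero on both sides and the middle map becomes an isomorphism $\widetilde{H}_i(\del_{\Delta_q}(p)) \cong \widetilde{H}_i(\Delta_q)$, which is exactly the conclusion.

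The only subtle step is justifying the join identification $\link_{\Delta_q}(p) = \Delta_p * \Delta(p,q)$ used with the Künneth formula: a chain in the link is a totally ordered chain in $(\hat 0, q)$ that is compatible with $p$ but avoids $p$, and such chains split uniquely into a portion lying in $(\hat 0, p)$ and a portion lying in $(p, q)$. Once this order-theoretic bookkeeping is in place, the rest is a direct application of standard simplicial homology, and I expect no further obstacles.
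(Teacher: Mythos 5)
Your proposal is correct and follows essentially the same route as the paper: the same star/deletion decomposition of $\Delta_q$, the same identification of the link as the join $\Delta_p * \Delta(p,q)$ with a K\"unneth computation to kill its homology, the cone observation for the star, and the Mayer--Vietoris collapse to the desired isomorphism.
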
  

\begin{proof}
Take $q\in P$ and recall that for any element $p<q$ in $(\hat{0},q)$, we have 
$\link_{\Delta_q}(p)=\Delta_p*\Delta(p,q)$. 
Using the Kunneth formula, the reduced homology of $\link_{\Delta_q}(p)$ is 
$$\Ho_k(\link_{\Delta_q}(p))=\bigoplus_{i+j=k-1}\Ho_{i}(\Delta_p)\otimes\Ho_{j}(\Delta(p,q)).$$ 
Since $p$ is assumed to be a non-contributor, $\Ho_{i}(\Delta_p)=0$ for every $i$ and 
therefore $\Ho_k(\link_{\Delta_q}(p))=0$ for every $k$. 

Next, consider the Mayer-Vietoris sequence in reduced homology for 
the triple $$\left(\del_{\Delta_q}(p),\Star_{\Delta_q}(p),\Delta_q\right). $$
Since $\link_{\Delta_q}(p)=\del_{\Delta_q}(p)\cap\Star_{\Delta_q}(p)$ 
has homology which vanishes 
in every dimension, the Mayer-Vietoris sequence 
$$
\begin{aligned}
\cdots \rightarrow 
\Ho_i(\link_{\Delta_q}(p))\,\rightarrow\, &
\Ho_i(\del_{\Delta_q}(p)) \oplus \Ho_i(\Star_{\Delta_q}(p))\, \hspace{2cm}\\ 
& \\
& \rightarrow\Ho_i(\Delta_q) \rightarrow \Ho_{i-1}(\link_{\Delta_q}(p))\,\rightarrow\cdots 
\end{aligned}
$$ 
reduces for each $\ell$ to 
\[ 0  
\rightarrow \Ho_\ell (\del_{\Delta_q}(p)) \oplus \Ho_\ell (\Star_{\Delta_q}(p)) 
\stackrel{\psi_\ell}{\rightarrow} \Ho_\ell (\Delta_q) 
\rightarrow 0 \]
Hence, the map $\psi_\ell$ is an isomorphism. Furthermore, the simplicial complex $\Star_{\Delta_q}(p)$ 
is a cone with apex $p$ and therefore $\Ho_\ell (\Star_{\Delta_q}(p)) = 0$ for all $\ell$.  
Thus, $\Ho_\ell(\del_{\Delta_q}(p),\Bbbk)\cong\Ho_\ell(\Delta_q,\Bbbk)$ for all $\ell$. 
\end{proof}  

When $\Delta_p$ has trivial homology, the 
isomorphism between the homology of the order complex of the original open interval 
$(\hat{0},q)$ and the homology of the order complex of the poset $(\hat{0},q)\smsm\{p\}$ 
suggests the following definition

\begin{definition}
The \emph{Betti poset} of a poset $P$ is the subposet 
consisting of all homologically contributing elements, 
$$B(P)=\{q\in P \ \vert\ \Ho_i(\Delta_q) \neq 0 \textup{ for at least one } i\}.$$
\end{definition}

The name \emph{Betti poset} is motivated by the study of minimal resolutions 
of monomial ideals. This object was introduced in \cite{CMRigid} and studied in 
\cite{TchernevVarisco}. We specialize to monomial ideal setting in Section \ref{S:BettiOfLCM}. 
In particular, if $P$ is a finite atomic lattice, then it is poset-isomorphic to the 
lcm-lattice of a monomial ideal. In this context, the elements of the Betti poset of $P$ 
consist of those multidegrees of a monomial ideal which contribute Betti numbers in the 
minimal free resolution. Before turning to this application, we prove several general facts about Betti posets. 

\begin{corollary}\label{SameBettis}
Suppose $p\in P$ is a noncontributor, i.e. that $h_i (\Delta_p) = 0$ for all $i$. 
For $P'=P\smsm\{p\}$ with the induced partial ordering, $B(P)=B(P')$. 
\end{corollary}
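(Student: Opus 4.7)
The plan is to show directly that for every $q\in P'=P\smsm\{p\}$, the order complexes $\Delta_q^P$ and $\Delta_q^{P'}$ have isomorphic reduced homology in every degree. Once this is established, the assertion $B(P)=B(P')$ is immediate: since $p$ itself is a non-contributor, $p\notin B(P)$, hence $B(P)\subseteq P'$, and the homological equivalence above guarantees that $q\in B(P)$ if and only if $q\in B(P')$ for each $q\in P'$.

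To verify the claim, I would carry out a case split on whether $p<q$ in $P$. If $p\not< q$ (i.e.\ $p$ is incomparable to $q$ or $p\geq q$), then $p\notin(\hat 0,q)^P$, so the open intervals $(\hat 0,q)^P$ and $(\hat 0,q)^{P'}$ coincide as induced subposets, and consequently $\Delta_q^P=\Delta_q^{P'}$ as simplicial complexes. If $p<q$, then $(\hat 0,q)^{P'}=(\hat 0,q)^P\smsm\{p\}$, so at the level of order complexes $\Delta_q^{P'}=\del_{\Delta_q^P}(p)$. Since the hypothesis $h_j(\Delta_p)=0$ for all $j$ is precisely the assumption of Lemma \ref{removeOne}, that lemma applies to the pair $p<q$ and yields $\Ho_i(\Delta_q^P,\Bbbk)\cong\Ho_i(\del_{\Delta_q^P}(p),\Bbbk)=\Ho_i(\Delta_q^{P'},\Bbbk)$ for every $i$.

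I do not anticipate a real obstacle here: the corollary is essentially a bookkeeping consequence of Lemma \ref{removeOne}. The only point requiring care is the identification $\Delta_q^{P'}=\del_{\Delta_q^P}(p)$ when $p<q$, which rests on the fact that $P'$ carries the induced partial order, so that the open interval below $q$ in $P'$ is exactly the set-theoretic deletion of $p$ from the open interval below $q$ in $P$. Once that identification is in place, the two cases above cover every $q\in P'$ and the equality $B(P)=B(P')$ follows.
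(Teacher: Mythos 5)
Your proposal is correct and follows essentially the same route as the paper: the same case split (intervals below $q$ unchanged when $p\not<q$, and Lemma \ref{removeOne} together with the identification $\del_{\Delta_q^P}(p)=\Delta_q^{P'}$ when $p<q$). The only difference is cosmetic — you make explicit the bookkeeping remark that $p\notin B(P)$ so $B(P)\subseteq P'$, which the paper leaves implicit.
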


\begin{proof}
Suppose $p\in P$ has the property that $h_i(\Delta_p^P) = 0$ for every $i$. 
For those $q \in P$ such that $q < p$, or when $q$ is not comparable to $p$, 
the interval $(\hat{0}, q)$ is the same in both $P$ and $P'$. As such, 
the isomorphism on order complexes induces an isomorphism of homology. 
Thus, if $q < p$ then $q\in B(P)$ if and only if $q\in B(P')$. 

For $q > p$, then according to Lemma \ref{removeOne}, 
$\Ho_i(\Delta_q^P)\cong\Ho_i(\del_{\Delta_q^P}(p))$ for every $i$. 
However, $\del_{\Delta_q^P}(p) = \Delta_q^{P'}$ and therefore  
$\Ho_i(\Delta_q^P) \cong \Ho_i\left(\Delta_q^{P'}\right)$. 
Hence, if $q > p$ then $q\in B(P)$ if and only if $q\in B(P')$. 
\end{proof}

\begin{theorem}\label{removeNC}
Let $P$ be a poset and $B(P)$ its Betti poset. 
For each $q\in B(P)$ we have an isomorphism of $\ \Bbbk$-vector spaces 
$\Ho_i(\Delta_q^P) \cong \Ho_i\left(\Delta_q^{B(P)}\right)$.
\end{theorem}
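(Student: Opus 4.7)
My plan is to prove Theorem \ref{removeNC} by iterating Lemma \ref{removeOne} and Corollary \ref{SameBettis}, using induction on the number $N=|P\smsm B(P)|$ of non-contributing elements of $P$.

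The base case $N=0$ is trivial since then $P=B(P)$. For the inductive step, suppose $N\geq 1$. Since $B(P)\subsetneq P$, there exists a non-contributor $p\in P\smsm B(P)$. Set $P'=P\smsm\{p\}$ with the induced order. By Corollary \ref{SameBettis}, $B(P)=B(P')$, so in particular $q\in B(P)$ implies $q\in B(P')$ and $q\neq p$, and moreover $|P'\smsm B(P')|=N-1$, placing $P'$ within reach of the inductive hypothesis.

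For a fixed $q\in B(P)$, I plan to establish the isomorphism $\Ho_i(\Delta_q^P)\cong\Ho_i(\Delta_q^{P'})$ by a brief case analysis on the relative position of $p$ and $q$. If $p\not<q$ (that is, $q<p$ or $p$ and $q$ are incomparable) then $p\notin(\hat{0},q)$, so $\Delta_q^P=\Delta_q^{P'}$ as simplicial complexes and there is nothing to show. If instead $p<q$, then $p$ is a non-contributor to the homological data of $P$ lying strictly below $q$, and Lemma \ref{removeOne} applies directly to yield $\Ho_i(\Delta_q^P)\cong\Ho_i(\del_{\Delta_q^P}(p))$. Noting that $\del_{\Delta_q^P}(p)=\Delta_q^{P'}$ (since deleting the vertex $p$ from the order complex of $(\hat 0,q)\subset P$ yields precisely the order complex of $(\hat 0,q)\subset P'$) closes this case. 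Applying the inductive hypothesis to $P'$ then gives $\Ho_i(\Delta_q^{P'})\cong\Ho_i(\Delta_q^{B(P')})=\Ho_i(\Delta_q^{B(P)})$, and composition finishes the proof.

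The only real subtlety is confirming that the inductive step is set up consistently: when we excise $p$, we need the non-contributors of $P'$ to coincide with the non-contributors of $P$ other than $p$, so that subsequent applications of Lemma \ref{removeOne} remain valid. This is exactly the content of Corollary \ref{SameBettis}, which guarantees that the Betti poset is stable under removal of a single non-contributor. Beyond this, everything is bookkeeping; there is no topological difficulty deeper than the single Mayer--Vietoris argument already carried out in Lemma \ref{removeOne}.
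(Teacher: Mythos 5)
Your proposal is correct and follows essentially the same route as the paper: induction on the number of non-contributors, peeling off one non-contributor $p$ at a time, using Corollary \ref{SameBettis} to keep $B(P)=B(P')$ and Lemma \ref{removeOne} together with the identification $\del_{\Delta_q^P}(p)=\Delta_q^{P'}$ in the case $p<q$. The only cosmetic difference is that you start the induction at $N=0$ (where $P=B(P)$) rather than at one non-contributor, which is fine.
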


\begin{proof}
By induction on the number of non-contributing elements in $P$. 

The base case where $P$ and $B(P)$ differ by only 
one non-contributing element $p$, is a special case of Corollary \ref{SameBettis}. 
Thus, for every $q \in B(P)$, we have 
$\Ho_i(\Delta_q^P) \cong \Ho_i\left(\Delta_q^{B(P)}\right)$. 

Let $k>1$ and suppose that for any poset $P'$ which 
has fewer than $k$ non-contributing elements, 
$\Ho_i(\Delta_q^{P'}) \cong \Ho_i\left(\Delta_q^{B(P')}\right)$ for every $q\in B(P')$. 
Let $P$ be a poset which has $k$ non-contributing elements. Write $p$ for a 
non-contributor of $P$, so that we have $P=P'\cup\{p\}$ for some $p$. 
Taking $q\in B(P)$ we have two possibilities, similar to the proof of Corollary \ref{SameBettis}. 

First, if $q<p$ or if $q$ is not comparable to $p$, then $(\hat{0}, q)$ is the 
same in both $P'$ and $P$, inducing an isomorphism in homology 
$\Ho_i(\Delta_q^{P})\cong\Ho_i(\Delta_q^{P'})$. Using the induction hypothesis 
in concert with the fact that Corollary \ref{SameBettis} guarantees $B(P)=B(P')$, 
we have 
$$\Ho_i(\Delta_q^{P})\cong\Ho_i(\Delta_q^{P'})
\cong \Ho_i\left(\Delta_q^{B(P')}\right)\cong \Ho_i\left(\Delta_q^{B(P)}\right).$$ 

Alternately, if $q>p$ then 
Lemma \ref{removeOne} applies, and viewing each element in the larger poset 
$P$, we have $\Ho_i(\Delta_q^{P})\cong\Ho_i(\del_{\Delta_q^{P}}(p))$. 
Since $\del_{\Delta_q^P}(p) = \Delta_q^{P'}$, then we obviously have 
$\Ho_i(\del_{\Delta_q^{P}}(p))\cong\Ho_i(\Delta_q^{P'})$. 
The induction hypothesis now applies to $P'$, a poset 
with fewer than $k$ non-contributors. Together with the equality of posets 
$B(P)=B(P')$ guaranteed by Corollary \ref{SameBettis},  we have 
$$\Ho_i(\Delta_q^{P})\cong\Ho_i(\del_{\Delta_q^{P}}(p))\cong\Ho_i(\Delta_q^{P'})
\cong \Ho_i\left(\Delta_q^{B(P')}\right)\cong \Ho_i\left(\Delta_q^{B(P)}\right),$$
which completes the proof. 
\end{proof}

\section{Resolutions of monomial ideals}\label{S:BettiOfLCM}

We now use the results of Section \ref{S:BettiPoset} to study the minimal free resolution of 
a monomial ideal $M$ in a polynomial ring $R$. 
Recall that the lcm-lattice of $M$ is the set $L_M$ of least common multiples 
of the $n$ minimal generators of $M$, with minimal element $1\in R$ and ordering given by divisibility. 
For the remainder of the paper, we consider the Betti poset of an lcm-lattice, which we denote 
$B_M=B(L_M)$. 

With the appropriate notions established, we state the first commutative algebra result of this paper. 

\begin{theorem}\label{BettiMFR}
Suppose $M\subset R=\Bbbk[x_1,\ldots,x_d]$ and $N\subset S=\Bbbk[y_1,\ldots,y_{t}]$ 
are monomial ideals such that $B_M\cong B_N$. %but $L_M\not\cong L_N$, 
A minimal resolution of $M$ can be relabeled to give the minimal resolution of $N$. 
\end{theorem}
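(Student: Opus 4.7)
The plan is to transport a minimal $\mathbb{Z}^d$-graded resolution of $R/M$ across the poset isomorphism $\phi\colon B_M\to B_N$ to obtain a $\mathbb{Z}^t$-graded resolution of $S/N$. The argument will unfold in three steps: extract the multigraded Betti numbers from the Betti poset, define the relabeled complex, then verify exactness.

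The first step invokes the Gasharov-Peeva-Welker formula
$$\beta_{i,m}(R/M)\;=\;\dim_\Bbbk\Ho_{i-2}\bigl(\Delta_m^{L_M};\,\Bbbk\bigr), \qquad m\in L_M,$$
which vanishes for $m\notin B_M$ by definition of the Betti poset. Theorem~\ref{removeNC} replaces $\Delta_m^{L_M}$ by $\Delta_m^{B_M}$ without altering its reduced homology, so each multigraded Betti number of $R/M$ depends only on the isomorphism class of $B_M$. Applying the analogous statement to $N$ and transporting across $\phi$ yields $\beta_{i,m}(R/M)=\beta_{i,\phi(m)}(S/N)$ for all $m\in B_M$ and all $i$, with Betti numbers vanishing outside $B_M$ and $B_N$.

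The second step is the relabeling itself. Fix a minimal $\mathbb{Z}^d$-graded resolution $F_\bullet$ of $R/M$, so that $F_i=\bigoplus_{m\in B_M}R(-m)^{\beta_{i,m}(R/M)}$. Each entry of the differential $d_i^F$ is a scalar multiple of a monomial $m/m'$ with $m',m\in B_M$ and $m'\mid m$. Because $\phi$ is an order isomorphism of $B_M$, the divisibility $m'\mid m$ in $B_M$ is equivalent to $\phi(m')\mid\phi(m)$ in $B_N$, so the replacement $m/m'\mapsto\phi(m)/\phi(m')$ yields a well-defined $\mathbb{Z}^t$-graded complex of free $S$-modules
$$G_i\;:=\;\bigoplus_{m\in B_M}S(-\phi(m))^{\beta_{i,m}(R/M)}$$
with corresponding differential $d_i^G$. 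The identity $d_{i-1}^F\circ d_i^F=0$ expands, for each chain $m''<m'<m$ in $B_M$, into a scalar relation $\sum_{m'}c_{m'',m'}\,c_{m',m}=0$ that depends only on the poset structure of $B_M$; these same scalar relations force $d_{i-1}^G\circ d_i^G=0$. The relabeled entries $c\cdot\phi(m)/\phi(m')$ lie in the maximal ideal of $S$, so $G_\bullet$ is minimal, and $H_0(G_\bullet)=S/N$ since the atoms of $B_M$ (the minimal generators of $M$) map under $\phi$ to the atoms of $B_N$ (the minimal generators of $N$).

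The main obstacle is verifying exactness of $G_\bullet$ in positive degrees. My plan is to check this one $\mathbb{Z}^t$-graded strand at a time. For each $n\in\mathbb{Z}^t_{\geq 0}$, the strand $(G_\bullet)_n$ is a complex of $\Bbbk$-vector spaces whose basis is indexed by the downward-closed set $\{m\in B_M : \phi(m)\mid n\}$, counted with multiplicities $\beta_{i,m}(R/M)$, and whose scalar differentials are pulled back from $F_\bullet$. Setting $m^*:=\lcm\{m\in B_M:\phi(m)\mid n\}\in L_M$, the same scalar data governs the strand $(F_\bullet)_{m^*}$, restricted to Betti-contributing multidegrees dividing $m^*$. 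The delicate point, where the real work of the proof lies, is to verify that the index set $\{m\in B_M:m\mid m^*\}$ agrees with $\{m\in B_M:\phi(m)\mid n\}$ after suppressing non-contributors via Theorem~\ref{removeNC}, and to compare $x^{m^*}\in M$ with $y^n\in N$ so that $H_0((F_\bullet)_{m^*})\cong H_0((G_\bullet)_n)$. Exactness of $F_\bullet$ at the strand $m^*$ then transfers to exactness of $(G_\bullet)_n$, showing $G_\bullet$ is the required minimal resolution of $S/N$.
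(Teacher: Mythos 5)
Your first two steps run parallel to the paper's own argument: the multigraded Betti numbers are read off the Betti poset via Theorem \ref{removeNC}, and the candidate resolution of $S/N$ is obtained by keeping the scalars of the minimal differential of $\mathcal{F}_M$ while replacing each monomial entry $\mathbf{x^{a}}/\mathbf{x^{a'}}$ by $\phi(\mathbf{x^{a}})/\phi(\mathbf{x^{a'}})$. Your explicit checks that the scalar identities force $d^G_{i-1}\circ d^G_i=0$, that the relabeled entries are non-units, and that $H_0(G_\bullet)=S/N$ are fine (the paper leaves these points implicit), so up to that point you are doing essentially what the paper does.

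The genuine gap is in your exactness step, and the identification you yourself flag as ``the delicate point'' is in fact false, so the plan as written cannot be completed. For a multidegree $n$ with $u=\lcm\{\text{generators of }N\text{ dividing }y^n\}\in L_N$, the strand $(G_\bullet)_n$ is indexed by the order ideal $A=\phi^{-1}\{q\in B_N:\ q\mid u\}$ of $B_M$, and you propose to match it with the strand of $F_\bullet$ at $m^*=\lcm\{m\in B_M:\phi(m)\mid y^n\}$. But $\{m\in B_M:\ m\mid m^*\}$ can be strictly larger than $A$, because $\phi$ is only an order isomorphism of Betti posets and has no compatibility with joins computed in $L_M$ versus $L_N$. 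The paper's own Example \ref{example1} exhibits this: take $u$ to be the element labelled $12$ of $L_N$ (which lies outside $B_N$), so that $A=\{\hat{0},2,3,4,8,9\}$; in $L_M$ the join of $8$ and $9$ is the maximum element $13$, hence $m^*=\hat{1}$ and $\{m\in B_M:\ m\mid m^*\}$ is all of $B_M$, containing $7,10,11$ whose images do not divide $y^n$. Indeed no element $w\in L_M$ has $\{m\in B_M:\ m\mid w\}=A$, so the strand you must prove exact (here of $\Bbbk$-dimensions $1,3,2$) is not any $\mathbb{Z}^d$-graded strand of $F_\bullet$, and exactness cannot simply be transferred. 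What actually needs proof is that the scalar subcomplex of $F_\bullet$ spanned by basis elements whose degrees lie in the (generally non-principal) order ideal $\phi^{-1}\{q\in B_N:\ q\mid u\}$ has the required homology for every $u\in L_N$; this is the real content of the theorem, which the paper settles by asserting the correspondence of graded strands through $B_M\cong B_N$ and which is treated functorially in \cite{TchernevVarisco}.
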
 

\begin{proof} 
Suppose $(\mathcal{F}_M,\partial^M)$ is a minimal free resolution of $M$. 
We exploit the hypothesized isomorphism on Betti posets $g:B_M\ra B_N$ 
to write a minimal resolution for $N$. 

Since $B_M\cong B_N$, then for every 
$\mathbf{x^{a}}\in B_M$, 
there exists a corresponding monomial $g(\mathbf{x^{a}})\in B_N$ of multidegree $\mathbf{b}$. 
For every such $g(\mathbf{x^a})\in B_N$, write $V_\mathbf{b}^i$ as a rank 
$i$ vector space over $\Bbbk$ with basis $\{e_{\mathbf{b}}^j\}_{j=1}^i$. The 
vector spaces $V_\mathbf{b}^i$ are in one-to-one correspondence 
with the free modules of $\mathcal{F}_M$, with $V_\mathbf{b}^i$ appearing 
in homological degree $r$ since $\redHo_{r-2}(\Delta(1,\mathbf{x^{a}}))$ is nonzero. 

To define the differential of $\mathcal{F}_N$, suppose the differential 
of $\mathcal{F}_M$ takes the form 
$$
\ds\partial^M(e_{\mathbf{a}})=
\sum c_{\mathbf{a},\mathbf{a'}}\cdot\frac{\mathbf{x^{a}}}{\mathbf{x^{a'}}}\cdot e_{\mathbf{a'}}
$$
where $ c_{\mathbf{a},\mathbf{a'}}\in\Bbbk$, 
the bases $e_{\mathbf{a}}$ and $e_{\mathbf{a'}}$ are 
respective generators for free modules in homological degree $r$ and $r-1$, 
and $\mathbf{x^{a'}}<\mathbf{x^a}\in B_M$. 
The differential of the complex $\mathcal{F}_M\otimes R/(x_1-1,\ldots, x_d-1)$ is 
$$
\ds\delta^M(e_{\mathbf{a}})=
\sum c_{\mathbf{a},\mathbf{a'}}\cdot e_{\mathbf{a'}}. 
$$

We now relabel this complex and its differential using the data of the Betti 
poset $B_N$. For every $g(\mathbf{x^a})\in B_N$ of multidegree $\mathbf{b}$, 
define a free module with appropriate shift in multidegree,  $S(-\mathbf{b})$. 
Certainly, each summand of the vector space $V_\mathbf{b}^i$ appearing in our complex 
corresponds to exactly one of the free modules whose shift is $-\mathbf{b}$.
Next, define the action of the differential of $\mathcal{F}_N$ as 
$$
\ds\partial^N(e_{\mathbf{b}})=
\sum c_{\mathbf{a},\mathbf{a'}}\cdot\frac{g(\mathbf{x^{a}})}{g(\mathbf{x^{a'}})}\cdot e_{\mathbf{b'}}
$$
where $\mathbf{{b'}},\mathbf{b}$ are the multidegrees of the monomials corresponding to 
the comparison $g(\mathbf{x^{a'}})<g(\mathbf{x^a})\in B_N$. 

The sequence $\mathcal{F}_N$ certainly has the minimum number of free modules, 
since they were created using the data of the Betti poset $B_N$. Furthermore, 
$\mathcal{F}_N$ is a resolution of $S/N$ if and 
only if the subcomplex of $\mathcal{F}_N(\le m)$ is exact for every $m\in L_N$. 
The exactness of $\mathbb{Z}^t$-graded strands is certainly satisfied, since the 
$\mathbb{Z}^t$-degrees of $\mathcal{F}_N$ are in one-to-one correspondence 
with those of $\mathcal{F}_M$ through the isomorphism $B_M\cong B_N$. 
Hence, $\mathcal{F}_N$ is a minimal free resolution of $S/N$. 
\end{proof}

\begin{remark}
The conclusion of Theorem \ref{BettiMFR} is different than part (3) of Theorem 4.3 in 
\cite{PeevaVelasco}, which also addresses minimal resolutions. 
Their result assumes both an equality of total Betti numbers and 
the existence of a map between lcm-lattices which is 
a least common multiple preserving bijection on atoms. 
Under their assumption, one relabels a resolution of the source lattice's 
monomial ideal to create a resolution of the target lattice's monomial ideal. 

Our assumption of an isomorphism on Betti posets is more restrictive than one 
assuming equality of total Betti numbers. Indeed, ideals with the same total Betti numbers 
can have distinct Betti posets. However, our stronger assumption 
removes the need for a reduction map between lcm-lattices, while maintaining 
the ability to relabel minimal resolutions. 
In particular, monomial ideals which share the same Betti poset need not have 
lcm-lattices which are related by a reduction map. 
\end{remark}

\begin{example}\label{example1}
To illustrate the previous remark consider the following example.  Let
$$M = (b^2ce^2f^2, cde^2f^2, ade^2f^2, abef, ab^2cdf, ab^2cde)$$ and let
$$N = (bce^2f^2, cde^2f^2, are^2f^2, a^2be^2f^2, a^2bcdf, a^2bcde)$$ be
two ideals in $k[a,b,c,d,e,f]$.  Their lcm-lattices (with the Betti 
poset elements marked as bold dots) are shown in Figure \ref{figure1}.

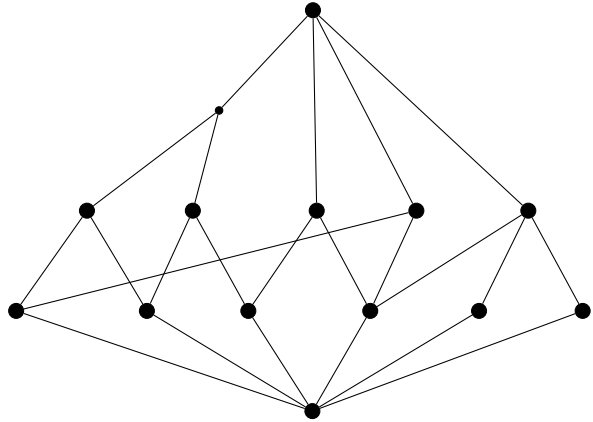
\begin{figure}\label{figure1}
\caption{Lattices from Example \ref{example1}}
\centering
\subfigure[$L_M$ with $B_M$ marked]{
\begin{tikzpicture}[scale=1, vertices/.style={draw, fill=black,
    circle, inner sep=1pt},
  vertices2/.style={draw,fill=black,circle,inner sep=2pt}]
              \node [vertices2] (0) at (-0+.240589,0){};
              \node [vertices2] (1) at (-3.75+.052275,1.33333){};
              \node [vertices2] (2) at (-3.75+1.79047,1.33333){};
              \node [vertices2] (3) at (-3.75+3.13899,1.33333){};
              \node [vertices2] (4) at (-3.75+4.75969,1.33333){};
              \node [vertices2] (5) at (-3.75+6.20612,1.33333){};
              \node [vertices2] (6) at (-3.75+7.58357,1.33333){};
              \node [vertices2] (7) at (-3+.243486,2.66667){};
              \node [vertices2] (8) at (-3+1.65119,2.66667){};
              \node [vertices2] (9) at (-3+3.29794,2.66667){};
              \node [vertices2] (10) at (-3+4.62288,2.66667){};
              \node [vertices2] (11) at (-3+6.11124,2.66667){};
              \node [vertices] (12) at (-1,4){};
              \node [vertices2] (13) at (-0+.248034,5.33333){};
      \foreach \to/\from in {0/1, 0/2, 0/3, 0/4, 0/5, 0/6, 1/10, 1/7, 2/8, 2/7, 3/8, 3/9, 4/9, 4/10, 4/11, 5/11, 6/11, 7/12, 8/12, 9/13, 10/13, 11/13, 12/13}
      \draw [-] (\to)--(\from);
      \end{tikzpicture}}

\subfigure[$L_N$ with $B_N$ marked]{
\begin{tikzpicture}[scale=1, vertices/.style={draw, fill=black,
    circle, inner sep=1pt},
vertices2/.style={draw,fill=black,circle,inner sep=2pt}]
              \node [vertices2] (0) at (-0+.0954296,0){};
              \node [vertices2] (1) at (-3.75+.222501,1.33333){};
              \node [vertices2] (2) at (-3.75+1.70917,1.33333){};
              \node [vertices2] (3) at (-3.75+3.09025,1.33333){};
              \node [vertices2] (4) at (-3.75+4.5988,1.33333){};
              \node [vertices2] (5) at (-3.75+6.06646,1.33333){};
              \node [vertices2] (6) at (-3.75+7.70945,1.33333){};
              \node [vertices2] (7) at (-3+.139087,2.66667){};
              \node [vertices2] (8) at (-3+1.70673,2.66667){};
              \node [vertices2] (9) at (-3+3.05989,2.66667){};
              \node [vertices2] (10) at (-3+4.79619,2.66667){};
              \node [vertices2] (11) at (-3+6.16729,2.66667){};
              \node [vertices] (12) at (-0+.158475,4){};
              \node [vertices2] (13) at (-0+.0718432,5.33333){};
      \foreach \to/\from in {0/1, 0/2, 0/3, 0/4, 0/5, 0/6, 1/10, 1/7, 2/8, 2/7, 3/8, 3/9, 4/9, 4/10, 4/11, 5/11, 6/11, 7/13, 8/12, 9/12, 10/13, 11/13, 12/13}
      \draw [-] (\to)--(\from);
      \end{tikzpicture}
}
\end{figure}

Note that $B_M$ is isomorphic to $B_N$, but there is no join
preserving map between $L_M$ and $L_N$. The labeled
elements which are not in the respective Betti posets in both figures
are incompatible. 
\end{example}

In \cite{CMRigid}, we study a natural generalization of generic ideals, the class of rigid ideals. 
A rigid ideal can have non-simplicial multidegrees which correspond to unique multigraded 
basis elements in the minimal free resolution. To proceed, recall the 
class of rigid monomial ideals. 

\begin{definition}\cite{CMRigid}
Let $M$ be a monomial ideal, with lcm-lattice $L_M$. 
Then $M$ is a \emph{rigid ideal} if the following two conditions hold: 
\begin{itemize}
\item[(R1)] For every $p\in L_M$, we have $h_i(\Delta_p)=1$ 
	for at most one $i$.
\item[(R2)] If there exist $p,q \in L_M$, where 
         $h_i(\Delta_p)=h_i(\Delta_{q})=1$ 
         for some $i$ then $p$ and $q$ are incomparable in $L_M$. 
\end{itemize}
\end{definition}

As one might expect, the rigidity of a monomial ideal is dependent on the characteristic 
of the field $\Bbbk$.  
 
\begin{remark}\label{stratification}
One implication of the definition of rigidity is
that if $\mathbf{b}$ and $\mathbf{b'}$ are such that
$\beta_{i,\mathbf{b}}$ and $\beta_{j,\mathbf{b'}}$ are nonzero and
$\mathbf{b} > \mathbf{b'}$ in $L$ then $i > j$.  To see this assume $j
> i$. Since $\mathbf{b'}$ is the $\mathbb{Z}^d$-degree of a $j$th
syzygy then there must be a $(j-1)$st syzygy whose 
$\mathbb{Z}^d$-degree divides $\mathbf{b'}$ (i.e. it will be less than
$\mathbf{b'}$ in $L$).  Repeating this we obtain a chain of elements
in $L$ which ends in an element corresponding to an $i$th syzygy of
$\mathbb{Z}^d$-degree $\mathbf{c} < \mathbf{b'}$ in $L$.  
This contradicts rigidity condition (R2), since by transitivity 
$\mathbf{c} < \mathbf{b}$ in $L$ and both $\beta_{i,\mathbf{b}}$ 
and $\beta_{i, \mathbf{c}}$ are nonzero.  Hence, $i > j$.
\end{remark}

We are in a position to state the second commutative algebra result of this paper. 

\begin{theorem}\label{BettiSupportsRigid}
The Betti poset supports the minimal free resolution of a rigid monomial ideal.  
\end{theorem}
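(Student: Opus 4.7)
The plan is to construct, from the combinatorial data of $B_M$, an explicit complex $\mathcal{F}_M$ of multigraded free $R$-modules and verify it is the minimal $\mathbb{Z}^d$-graded resolution of $R/M$. For each $p \in B_M$, rigidity condition (R1) guarantees a unique index $r(p)$ such that $h_{r(p)-2}(\Delta_p^{L_M}) = 1$, and I would place a rank-one summand $R(-\mdeg(p))$ in homological position $r(p)$. The total rank in homological degree $r$ then counts $\{p \in B_M : r(p) = r\}$, which by Theorem \ref{removeNC} combined with the standard formula $\beta_{r, m}(R/M) = \dim_\Bbbk \redHo_{r-2}(\Delta(\hat{0},m)^{L_M})$ agrees with the multigraded Betti numbers of $M$ exactly. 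Consequently, if this complex is shown to be a resolution, it will automatically be minimal and supported on $B_M$.

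The differential would come from the order-theoretic construction of \cite{Clark}: for $p < q$ in $B_M$ with $r(p) = r(q) - 1$, the coefficient of $e_p$ in $\partial(e_q)$ is a scalar $c_{q,p} \in \Bbbk$ times the monomial $\mathbf{x}^{\mdeg(q)}/\mathbf{x}^{\mdeg(p)}$, the scalars being assembled from the simplicial boundary on $\Delta_q^{B_M}$ together with a choice of generating cycle in $\Ho_{r(q)-2}(\Delta_q^{B_M})$. By Remark \ref{stratification}, any strict comparison $q > p$ in $B_M$ between contributors forces $r(q) > r(p)$ strictly, so the differential has entries of strictly positive multidegree, giving minimality for free once acyclicity is established.

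Acyclicity I would check one $\mathbb{Z}^d$-graded strand at a time. For each $m \in L_M$, the degree-$m$ strand $(\mathcal{F}_M)_m$ can be identified, by construction, with the reduced simplicial chain complex of $\Delta(\hat{0},m)^{B_M}$ (with an appropriate shift), so its homology is $\redHo_*(\Delta(\hat{0},m)^{B_M})$. By Theorem \ref{removeNC} this is isomorphic to $\redHo_*(\Delta(\hat{0},m)^{L_M})$, which by rigidity (R1) is concentrated in a single degree and one-dimensional when $m \in B_M$, and vanishes otherwise---precisely the expected multigraded Betti profile of $R/M$.

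The main obstacle is choosing the scalars $c_{q,p}$ coherently so that $\partial^2 = 0$ globally and each strand really is the augmented chain complex of $\Delta(\hat{0},m)^{B_M}$ rather than merely homologically equivalent to it. Rigidity renders this essentially forced: since each nonzero $\Ho_{r-2}(\Delta_q^{B_M})$ is one-dimensional, representative cycles are unique up to sign, and compatibility across homological degrees reduces to the standard $\partial^2 = 0$ identity for simplicial chains. Promoting this to the polynomial ring only introduces monomial multipliers, which respect chain composition, so the bulk of the technical work---sign bookkeeping, the inductive strand-exactness argument, and a careful lifting of the simplicial boundary---is precisely what Section \ref{S:MinResRigid} is advertised to carry out.
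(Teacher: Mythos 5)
The central step of your acyclicity argument rests on an identification that is false. The degree-$m$ strand of the complex built from $B_M$ is \emph{not} the reduced simplicial chain complex of $\Delta(\hat 0,m)^{B_M}$: in homological degree $r$ that strand is $\bigoplus_{p\le m}\redHo_{r-2}(\Delta_p^{B_M},\Bbbk)$, i.e.\ one copy of $\Bbbk$ for each Betti-poset element $p\le m$ with $r(p)=r$, whereas the chain groups of the order complex are spanned by chains $p_1<\cdots<p_k$ of poset elements; the ranks do not match. Worse, even if the identification held it would prove the wrong thing: for the complex to be a resolution each multigraded strand must be \emph{exact}, so if a strand's homology really were $\redHo_*(\Delta(\hat 0,m)^{B_M})$, nonzero for $m\in B_M$, the complex would fail to resolve $R/M$. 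You are conflating the resolution criterion (vanishing of strand homology) with the lcm-lattice formula $\beta_{r,m}=\dim_\Bbbk\redHo_{r-2}(\Delta(\hat 0,m)^{L_M})$, which computes Tor only after a resolution is already in hand. This is precisely the trap the paper must avoid: for poset-supported complexes there is no Bayer--Sturmfels-style equivalence between acyclicity of intervals and exactness of strands, so exactness cannot be bought this way and Theorem \ref{removeNC} alone does not close the gap.

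Two genuine pieces of work are skipped. First, $\partial^2=0$ is not ``essentially forced'' by one-dimensionality of the homology groups: when $B_M$ is not ranked, Clark's complex property (Proposition 7.1 of \cite{Clark}) does not apply directly, and the paper needs Lemmas \ref{freeModsSameInMR}, \ref{mapsAreZero} and \ref{mapsActLikeRankedCase} to show that the components $\varphi_i^{q,p}$ vanish for covers $p\lessdot q$ lying outside $\maxRanked(B_M,q)$ and that the top two maps of $\mathcal{D}(B_M)_{\le q}$ agree with those of the ranked poset $\mathcal{D}(\maxRanked(B_M,q))$, so the composite can be checked in the ranked case. Second, exactness in the paper is a rigidity-driven argument of a different nature: a kernel element $v=\sum c_q[w_q]$ is coned to $u=\sum c_q\{q,w_q\}$, one passes to the join $y=\bigvee q$ in $L_M$, and a case analysis using (R1), (R2), Theorem \ref{removeNC}, uniqueness of the element whose interval homology $[u]$ generates, and acyclicity of $B(L_M\smsm\{\hat 0\})$ shows that $\varphi_{i+1}$ maps a class onto $v$ or that $v=0$. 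Your closing paragraph defers exactly this content to ``what Section \ref{S:MinResRigid} is advertised to carry out,'' so as written the proposal sets up the right complex but does not prove the theorem.
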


The proof of Theorem \ref{BettiSupportsRigid} amounts to showing that a sequence 
of vector spaces 
derived from the combinatorial structure of the Betti poset $B_M$ is an exact complex. 
We postpone the details of this technical argument to Section \ref{S:MinResRigid}. 

In the remainder of this section, we discuss the significance 
of Theorem \ref{BettiSupportsRigid} with regards to a 
progression of ideas concerning the minimal resolution of monomial ideals. 
We use examples to make the case for the 
importance of the class of rigid ideals as fundamental to the construction of 
minimal resolutions. 

\begin{remark}
Note that one can always modify the minimal resolution 
of an arbitrary ideal by scaling 
or permuting the $\mathbb{Z}^d$-graded basis vectors and 
propagating this change in order to preserve exactness. Furthermore, for non-rigid ideals 
one may also change the $\mathbb{Z}^d$-graded basis in ways which change the (combinatorial) 
structure of the minimal free resolution. In the case of rigid ideals, such a 
change is impossible. In particular, we characterize 
rigid ideals in \cite[Proposition 1.5]{CMRigid} as having a minimal free resolution with 
unique $\mathbb{Z}^d$-graded basis. Theorem \ref{BettiSupportsRigid} 
therefore provides a unique combinatorial object which encodes both the unique $\mathbb{Z}^d$-graded 
basis and the mapping structure of the minimal free resolution of a rigid ideal. 
\end{remark}

Theorem \ref{BettiSupportsRigid}'s combinatorial prescription for the minimal resolution of 
a rigid monomial ideal allows us to take aim at Kaplansky's original question. 
For a non-rigid monomial ideal $M$ we strive to find a \emph{rigid deformation} of $M$ 
whose resolution can be relabeled to give a resolution of $M$.  We use the word deformation here 
as a reference to the notion of a \emph{generic deformation} in \cite{BPS} and \cite{MSY}.  
Our notion of producing a rigid deformation will not involve perturbing the 
exponents of the ideal's generators, so we omit an explicit discussion of 
deformations of exponents as mentioned in \cite{BPS} and \cite{MSY}. 
Instead we will discuss the equivalent notion found in \cite{GPW} and \cite{mapes}.  
First we will need the following definition. 

\begin{definition}\cite{phan}
Let $\mathcal{L}(n)$ be the set of all finite atomic lattices with $n$ ordered atoms.  
Set $P\geq Q\in\LN$ if there exists a join preserving map $f:P \rightarrow Q$ which is a bijection on atoms.  
\end{definition}

The condition that there be a join preserving map which is a bijection on atoms, is the same condition found in Theorem 3.3
of \cite{GPW} which states that a minimal resolution of an ideal with lcm-lattice $P$ can be relabeled to be a resolution of an 
ideal with lcm-lattice $Q$.  In \cite{GPW} the authors note that the deformation of exponents from \cite{BPS} produces 
lcm-lattices with this join preserving map.  Moreover, \cite[Theorem 5.1]{mapes} states that for any two comparable 
finite atomic lattices in $\mathcal{L}(n)$, there exist monomial ideals so that the ideal whose lcm-lattice is $P$ is a 
deformation of exponents of the ideal whose lcm-lattice is $Q$.  

\begin{definition}
A monomial ideal $J$ is a \emph{rigid deformation} of the monomial ideal 
$I$ if $J$ is a rigid ideal, and the resolution of $J$ can be relabeled to minimally resolve $I$. 
\end{definition} 

Note here we do not simply require comparability in $\mathcal{L}(n)$
as would be suggested by the work of \cite{GPW} and \cite{mapes}.
Corollary \ref{goDownToGoRigid} and Example \ref{noRigidAbove} should
illuminate to the reader why we chose to make the definition this way.

\begin{corollary} \label{goDownToGoRigid}
Let $L$ be a finite atomic lattice and suppose that $P>L\in\LN$ is rigid with the same Betti numbers as $L$. 
Then every $L'$ for which $B(L)\cong B(L')$ has its minimal resolution supported on $B(P)$. 
In other words, $P$ is a rigid deformation of $L'$. 
\end{corollary}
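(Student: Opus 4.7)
The plan is to chain three relabeling results: Theorem \ref{BettiSupportsRigid} for the minimal resolution of the ideal with lcm-lattice $P$, the join-preserving relabeling from \cite[Theorem 3.3]{GPW} corresponding to the comparability $P > L$ in $\LN$, and Theorem \ref{BettiMFR} for the Betti poset isomorphism $B(L) \cong B(L')$. Since $P$ is rigid, Theorem \ref{BettiSupportsRigid} produces a minimal free resolution $(\mathcal{F}_P,\partial^P)$ of the monomial ideal $I_P$ associated with $P$ whose underlying combinatorial skeleton is $B(P)$.

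Next, the comparability $P > L$ in $\LN$ yields a join-preserving bijection on atoms $f\colon P \to L$, and by \cite[Theorem 3.3]{GPW} one can relabel $\mathcal{F}_P$ via $f$ to obtain a free resolution of $I_L$. The hypothesis that $P$ and $L$ have the same Betti numbers is decisive here: the relabeled complex has $\beta_i(P) = \beta_i(L)$ free summands in each homological degree $i$, so it must already be a minimal free resolution of $I_L$. Because the relabeling only modifies the multidegree labels and not the underlying chain complex, this minimal resolution of $I_L$ remains supported on $B(P)$. Finally, since $B(L) \cong B(L')$, Theorem \ref{BettiMFR} further relabels the minimal resolution of $I_L$ to produce the minimal free resolution of $I_{L'}$, still supported on $B(P)$; composing the two relabelings realizes the minimal resolution of $I_{L'}$ as a relabeling of $(\mathcal{F}_P,\partial^P)$, which is exactly what it means for $P$ to be a rigid deformation of $L'$.

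The main obstacle is the bookkeeping of supports under the \cite{GPW} relabeling: one must verify that the equal-Betti-number hypothesis forces the relabeled complex to be minimal rather than carrying redundant free summands, while leaving the supporting combinatorial skeleton $B(P)$ intact. Any nonminimal redundancy in the relabeled complex would violate the equality $\beta_i(P) = \beta_i(L)$, which dispatches this concern. Once the intermediate minimal resolution of $I_L$ supported on $B(P)$ is in hand, the remainder of the argument is a formal application of Theorem \ref{BettiMFR}.
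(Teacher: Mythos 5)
The paper states this corollary without an explicit proof, relying on the reader to chain exactly the three results you invoke: Theorem \ref{BettiSupportsRigid} puts the minimal resolution of the rigid ideal on $B(P)$, the comparability $P>L$ in $\LN$ together with the equality of Betti numbers makes the GPW relabeling already minimal for $L$'s ideal without changing the supporting frame, and Theorem \ref{BettiMFR} carries this across the isomorphism $B(L)\cong B(L')$. Your argument is correct and is precisely the intended route; you also correctly isolate the one point that deserves a sentence, namely that the equal-Betti-number hypothesis is what upgrades the GPW relabeling from ``a resolution'' to ``the minimal resolution'' while keeping $\mathcal{D}(B(P))$ as the underlying frame.
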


\begin{example}\label{noRigidAbove}
Let $R=\Bbbk[a,s,t,u,v,w,x,y,z]$ and consider the squarefree monomial ideal 
$I=(uvxyz,atwxyz,stuwz,astuvwx,asuvwxy,stvyz)$ 
whose lcm-lattice is pictured in Figure 2. 
The rigid deformation of $I$ has an lcm-lattice which is not comparable to $L_I$ in $\mathcal{L}(6)$, 
although the minimal free resolution of $I$ is supported on a regular
CW-complex with the intersection property.  In particular, the
highlighted element in $L_I$ is not in $B_I$. Any attempt at a rigid 
deformation that produces a lattice comparable to $L_I$ forces an 
increase in total Betti numbers.  To produce a rigid
deformation of $I$, we locate a lattice which is comparable to $B_I$, the lattice 
created from $L_I$ by removing the highlighted element removed. 
We are fortunate that $B_I$ is also a finite atomic lattice. 

\center
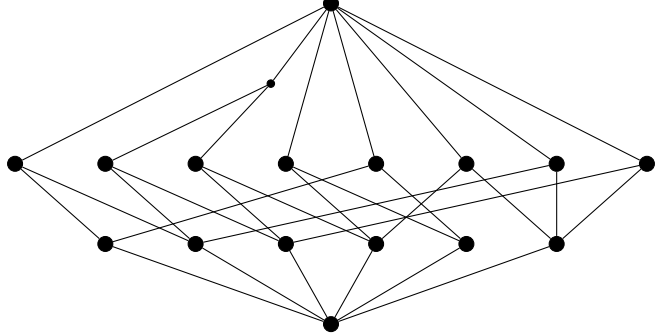
\begin{figure}\label{NoHigherRigid}
\centering
\caption{The lcm-lattice $L_I$ of Example \ref{noRigidAbove}}
 \begin{tikzpicture}[scale=0.8, vertices/.style={draw, fill=black,
    circle, inner sep=1pt},
  vertices2/.style={draw,fill=black,circle,inner sep=2pt}]
             \node [vertices2] (0) at (-0+0,0){};
             \node [vertices2] (9) at (-3.75+0,1.33333){};
             \node [vertices2] (1) at (-3.75+1.5,1.33333){};
             \node [vertices2] (2) at (-3.75+3,1.33333){};
             \node [vertices2] (3) at (-3.75+4.5,1.33333){};
             \node [vertices2] (13) at (-3.75+6,1.33333){};
             \node [vertices2] (7) at (-3.75+7.5,1.33333){};
             \node [vertices2] (10) at (-5.25+0,2.66667){};
             \node [vertices2] (4) at (-5.25+1.5,2.66667){};
             \node [vertices2] (5) at (-5.25+3,2.66667){};
             \node [vertices2] (14) at (-5.25+4.5,2.66667){};
             \node [vertices2] (15) at (-5.25+6,2.66667){};
             \node [vertices2] (12) at (-5.25+7.5,2.66667){};
             \node [vertices2] (8) at (-5.25+9,2.66667){};
             \node [vertices2] (11) at (-5.25+10.5,2.66667){};
             \node [vertices] (6) at (-1,4){};
             \node [vertices2] (16) at (-0+0,5.33333){};
     \foreach \to/\from in {0/9, 0/1, 0/2, 0/3, 0/13, 0/7, 1/4, 1/8, 1/10, 2/4, 2/5, 2/11, 3/12, 3/5, 3/14, 4/6, 5/6, 6/16, 7/12, 7/8, 7/11, 8/16, 9/10, 9/15, 10/16, 11/16, 12/16, 13/14, 13/15, 14/16, 15/16}
     \draw [-] (\to)--(\from);
     \end{tikzpicture}
\end{figure}
\end{example}

We have the following result guaranteeing the existence of a rigid deformation for certain monomial ideals. 

\begin{proposition}\label{simplicialRigidDefo}
If $I$ is a monomial ideal whose minimal free resolution is supported on a simplicial complex $X$, 
then there exists a rigid ideal $J$ whose minimal free resolution 
is also supported on $X$ such that $L_I<L_J\in\LN$. 
That is, $J$ is a simplicial rigid deformation of $I$. 
\end{proposition}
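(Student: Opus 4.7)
The plan is to construct the rigid ideal $J$ by building its lcm-lattice $L_J$ combinatorially inside Phan's lattice $\LN$ and then invoking \cite[Theorem~5.1]{mapes} to realize $L_J$ as the lcm-lattice of a monomial ideal obtained from $I$ by deformation of exponents. Specifically, I aim to arrange $L_J$ so that its Betti poset $B(L_J)$ is isomorphic to the face poset of $X$ (augmented with $\hat{0}$); this will force $J$ to be rigid with minimal free resolution supported on $X$.

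The construction of $L_J$ proceeds by a sequence of upward refinements in $\LN$ starting at $L_I$. At each step I split an element of the current lattice that is the lcm of two or more distinct faces of $X$ into separate elements, one per face, via a Phan-style move that keeps the atoms fixed. After finitely many such moves, every face $\sigma$ of $X$ has its own multidegree $p_\sigma$ in $L_J$. Throughout the process, I would track the Betti poset using Corollary~\ref{SameBettis} and Theorem~\ref{removeNC}, arranging so that $B(L_J)$ equals exactly the face poset of $X$: each face $\sigma$ contributes $h_{\dim\sigma}(\Delta_{p_\sigma})=1$ since the open interval $(\hat{0},p_\sigma)$ has order complex equivalent to $\partial\sigma$, while non-face subsets either collapse onto face multidegrees or lie in acyclic intervals of $L_J$.

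Once $L_J$ is in hand, \cite[Theorem~5.1]{mapes} produces a monomial ideal $J$ with $L_J$ as its lcm-lattice, arising from $I$ via deformation of exponents; this yields $L_I\le L_J$ in $\LN$, with strictness provided by any splitting actually performed (and forced by adjoining a non-contributing element if $I$ were already rigid). Rigidity of $J$ is then immediate from the isomorphism $B(L_J)\cong$ face poset of $X$: condition (R1) holds because each simplex contributes in exactly one homological degree, and (R2) holds because distinct same-dimensional faces of a simplicial complex are always incomparable. Applying Theorem~\ref{BettiSupportsRigid} to the rigid ideal $J$ gives that $B(L_J)$ supports the minimal free resolution of $J$; under the identification with the face poset of $X$, this is precisely the $X$-labeled simplicial chain complex, and the conclusion follows.

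The main obstacle is the first step: sequencing the Phan refinements so that at the end $B(L_J)$ equals the face poset of $X$ exactly, with no extraneous Betti multidegrees arising from non-face subsets. One must verify inductively, using Theorem~\ref{removeNC} at each stage, that each splitting move preserves the predicted homology of the relevant open intervals and does not inadvertently promote a non-face subset to a contributor.
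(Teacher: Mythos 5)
Your overall target is the right one (arrange matters so that the Betti poset of $L_J$ is the augmented face poset of $X$, whence rigidity and support on $X$ follow), but the proposal leaves precisely the load-bearing step unexecuted. You describe ``Phan-style splitting moves'' performed iteratively from $L_I$, with the Betti poset tracked at each stage, and you yourself flag as ``the main obstacle'' the problem of sequencing these moves so that no extraneous contributors appear. That obstacle is the entire content of the proposition: nothing in the proposal specifies the moves, proves they exist, or shows the non-face multidegrees end up in acyclic intervals. In particular, your claim that ``non-face subsets either collapse onto face multidegrees or lie in acyclic intervals of $L_J$'' is asserted, not proved, and it cannot be proved without using the hypothesis that $X$ actually supports a resolution of $I$ --- i.e., the Bayer--Sturmfels criterion that $X_{\le b}$ is acyclic for every multidegree $b$. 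Your argument never invokes this, so as written it would apply equally to a complex $X$ with the right faces but failing the acyclicity condition, where the conclusion is false.

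The paper avoids the iterative-refinement difficulty entirely with a single explicit construction: let $P$ be the augmented face lattice of $X$ and let $T$ be the meet-closure of $L_I\cup P$, viewing both as sets of atomic supports; then $T$ is a finite atomic lattice lying above both $L_I$ and $P$ in $\LN$, and $L_J\cong T$. Because $X$ is simplicial, no old element $l$ of $L_I$ sits below any face element $p$, so the intervals $(\hat{0},p)_T$ agree with those in $P$; and for the old elements $l$, the interval $(\hat{0},l)_T$ has order complex the barycentric subdivision of $X_{\le l}$, which is acyclic exactly by the Bayer--Sturmfels condition, with Lemma \ref{removeOne} used to peel off such elements from the bottom up when one $l$ lies above another. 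This is where the hypothesis on $I$ enters and where your plan is silent, so the proposal as it stands has a genuine gap rather than an alternative proof. (Also note the paper does not need \cite[Theorem 5.1]{mapes} to produce $J$; it suffices that every finite atomic lattice is an lcm-lattice, and comparability $L_I<L_J$ in $\LN$ is what the definition of rigid deformation requires.)
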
  

\begin{proof}
For ease of notation let us establish the following, $L = L_I$, and
$P$ is the augmented face poset of simplices of $X$, making 
$P$ a finite atomic lattice.  Note that each of these
lattices have the same number of atoms, and that in
terms of atomic supports, certain elements of $L$ correspond to
elements in $P$.  In what follows we will denote elements of $L$ which
do not also correspond to elements in $P$ as
$l$ (possibly indexed), elements of $P$ as $p$ (possibly indexed).
Our goal is to construct a new lattice $T$ with the
properties that $T$ is greater than $L$ and $P$ in $\mathcal{L}(n)$
such that there is an equality of total Betti numbers $\beta(T) = \beta(P)$.  
This lattice $T$ will give rise to the monomial ideal $J$ (and in fact $L_J$ will be isomorphic to $T$).  

Thinking of our finite atomic lattices as sets of sets (where the sets
correspond to atomic supports of each of the elements in the lattice),
let $T$ be the meet closure of $L \cup P$. By construction, $T$ is a finite
atomic lattice.  Moreover, we can think of $T$ as
consisting of elements of the following type 
$\{l \,\mid l \in L \mbox{ and does not correspond to an element of $P$}\}$ 
and $\{p \,\mid p \in P\}$.  
Note that one might worry that there is a subset of elements in $T$
which takes the form $\{m \,\mid m = l \wedge p \}$.  
This is however, not the case. Since $X$ is a simplicial complex, for each 
$p \in P$ every subset of the atomic support of $p$ in $P$ corresponds
to a distinct element in $P$.          

We need to show the following.\\
\begin{enumerate}
\item $\tilde{h}_i (\Delta_l^T) = 0$ for all $i$.
\item $\tilde{h}_i (\Delta_p^T) = \tilde{h}_i(\Delta_p^P)$ for all $i$.\\
\end{enumerate}

Proving the second item is easy. Since $X$ is a simplicial complex, none
of the elements $l$ can be less than any element $p$.  This means all of the
open intervals $(\hat{0},p)$ in $T$ are isomoprphic to the same
interval in $P$, guaranteeing that the homology groups are the same.

To prove the first item, we first assume $l$ is not greater than any 
other elements of type $l$.  In other words, all of the elements less 
than $l$ are of the type $p$.  In this case the open interval $(\hat{0},l)$ 
in $T$ is isomorphic to the union of half closed intervals 
\[P_{\leq l} = \bigcup_{p_i \ld_T l}(\hat{0},p_i]_P.\]  
By the acyclicity condition in \cite{BS}, 
we know that $X_{\leq l}$ is acyclic for every $l \in L$.  Since 
$\Delta(P_{\leq l})$ is the barycentric subdivision of $X_{\leq l}$ we  
conclude that $\tilde{h}_i (\Delta_l^T) = 0$ for all $i$. 

Now we need to remove the assumption that $l$ is not greater than any 
elements of the type $l$.  We do this by working 
up from the bottom of the lattice $T$.  We want to find an $l$ 
satisfying the earlier assumption, by the above argument 
$\tilde{h}_i(\Delta_l^T) = 0$ for all $i$. 
By Lemma \ref{removeOne} we can delete $l$ to create a subposet $T'$ so any $l' > l$ in $T$ 
no longer has $l$ below them in $T'$ and the homology computations for intervals in $T'$ are the same 
as in $T$.  Now if $l$ was the only element of type $l$ less than $l'$ 
in $T$, then in $T'$ the element $l'$ satisfies the assumption that it is only 
greater than elements of type $p$. Hence, the previous argument applies so that 
$\tilde{h}_i(\Delta_l^T)= \tilde{h}_i(\Delta_l^{T'}) = 0$ for all $i$.  
Note that in iterating this process we reduce $T$ down to $P$
(or $P - \{\hat{1}\}$ if $X$ is not a simplex), guaranteeing that $\beta(T) = \beta(P)$.  
\end{proof}

We have reason to believe that a more general statement is
true and propose the following conjecture. Recall that a CW-complex 
is said to have the intersection property if the intersection of any two cells 
is also a cell. 

\begin{conjecture}\label{CWIntConjecture}
If $I$ is a monomial ideal with a minimal resolution supported on a
regular CW-complex with the intersection property then $I$ admits a rigid deformation. 
\end{conjecture}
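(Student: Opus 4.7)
The plan is to mimic the construction from Proposition \ref{simplicialRigidDefo}, accounting for the fact that $X$ is now a regular CW-complex rather than a simplicial complex. Write $L = L_I$ and let $P$ denote the augmented face poset of $X$ (with a $\hat{1}$ adjoined when $X$ is not a single cell). The intersection property guarantees $P$ is a finite atomic lattice, and regularity implies that for every cell $c$ the closure $X_{\le c}$ is homeomorphic to $\Delta(P_{\le c})$. In particular, the Bayer--Sturmfels acyclicity of $X_{\le c}$ for non-Betti multidegrees $c$ translates into vanishing of $\Ho_\ast(\Delta(P_{\le c}))$, as in \cite{BS}.

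Next I would construct a candidate lattice $T$ as the meet closure of $L \cup P$, viewed as sets of atomic supports in $\mathcal{L}(n)$. By construction $T$ sits above both $L$ and $P$ in $\mathcal{L}(n)$, and its elements fall into three classes: (p) elements of $P$; (l) elements of $L$ with no corresponding cell of $X$; and (m) ``new'' meets $l \wedge p$ with $l \in L$, $p \in P$ which are not already of type (p) or (l). The goal is to show that $\tilde h_i(\Delta_p^T) = \tilde h_i(\Delta_p^P)$ for each $p \in P$ and that elements of types (l) and (m) are non-contributors in $T$. Iterating Corollary \ref{SameBettis} then gives $B(T) \cong B(P)$, and one would verify the two rigidity conditions on $B(T)$ using that $X$ already witnesses the minimal Betti data. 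The monomial ideal $J$ realizing $T$ as its lcm-lattice would be the sought rigid deformation of $I$.

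The core analysis splits in two. First, one shows by induction up the lattice that every type (l) or (m) element has acyclic open interval in $T$: the base case treats elements whose strict lower set in $T$ lies entirely in $P$, where $\Delta_l^T$ is the barycentric subdivision of $X_{\le l}$ and acyclicity comes from \cite{BS}; the inductive step removes lower non-contributors one at a time via Lemma \ref{removeOne}, which does not disturb the homology of intervals above them. Second, one must show that no element of type (l) or (m) is strictly less than any $p \in P$, so that $\Delta_p^T = \Delta_p^P$ on the nose.

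The hard part will be this last invariance claim. In the simplicial proof it was automatic because every subset of a simplex is again a face. In the CW setting, the intersection property controls pairwise intersections of \emph{cells}, but not arbitrary subsets of the atomic support of a cell, so one must argue that a putative proper sub-support of $p$ arising as an lcm in $L$ or as a new meet in $T$ is already a cell of $X$. I would attempt this by combining the intersection property with the fact that $X$ \emph{minimally} resolves $I$: any multidegree corresponding to a sub-support that carries nontrivial homology in $X_{\le p}$ must appear as a cell, while sub-supports that do not appear as cells should be absorbable into existing faces via the regularity of $X$. Should this invariance fail on some CW-complex, the conjecture would instead require a more delicate construction of $T$ than the naive meet closure, perhaps built by pushing up one element of $L \smsm P$ at a time through $\mathcal{L}(n)$ and applying Corollary \ref{goDownToGoRigid} at each stage.
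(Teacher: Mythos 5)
The statement you are proving is not proved in the paper at all: it is stated as Conjecture \ref{CWIntConjecture}, and the authors' surrounding discussion (the hexagon edge ideal example and the remark that there the meet-closure of the poset of cells is isomorphic to $L_I$, so no lattice $T$ can be built) makes clear that they do not know how to extend the argument of Proposition \ref{simplicialRigidDefo} beyond the simplicial case. Your proposal is exactly that natural extension attempt, and to your credit you locate the crux correctly; but you do not resolve it, so what you have is a plan with the decisive step missing, not a proof.

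Concretely, the gap is the ``invariance claim'' you flag yourself. In the simplicial proof, two facts are automatic because every subset of the vertex set of a face is again a face: (i) the meet closure $T$ of $L\cup P$ contains no new elements $l\wedge p$, and (ii) no element of type $l$ lies strictly below any $p\in P$, so $\Delta_p^T=\Delta_p^P$ and the non-contributor induction via Lemma \ref{removeOne} and the acyclicity of $X_{\le l}$ goes through without changing any Betti contribution of a cell. For a regular CW-complex the intersection property only controls pairwise intersections of closed cells; it says nothing about arbitrary sub-supports of a cell (a square cell has no cell on three of its vertices, for instance), so lcm-elements of $L$ and genuinely new meets can sit strictly inside intervals $(\hat 0,p)_T$, and then neither $\Delta_p^T\cong\Delta_p^P$ nor the claim that such elements are non-contributors is available; they could change homology of intervals above them and hence increase Betti numbers, which is exactly what the definition of rigid deformation forbids. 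Your proposed fix --- that non-cell sub-supports are ``absorbable into existing faces via the regularity of $X$'' --- is an assertion, not an argument, and your fallback of pushing up one element at a time and invoking Corollary \ref{goDownToGoRigid} is circular, since that corollary presupposes the existence of a rigid lattice above (or with the same Betti poset data as) $L$ with unchanged Betti numbers, which is precisely what must be produced. Until the structure of the meet closure $T$ (or some replacement for it) is controlled in the CW setting, the statement remains the open conjecture the paper says it is.
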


The notion of rigidity is not limited to resolutions supported on topological objects. 
In fact, Velasco's example of an ideal with non-CW resolution \cite{Velasco} is a rigid ideal. 
However, the assumption in Conjecture \ref{CWIntConjecture} that the regular 
CW-complex satisfy the intersection property is necessary. Consider the 
following example, whose homological structure was pointed out to us by Adam Boocher. 

\begin{example} 
Let $I$ be  the edge ideal of the hexagon, 
$$I=(x_1x_2,\, x_2x_3,\, x_3x_4,\, x_4x_5,\, x_5x_6,\, x_1x_6).$$
This ideal's resolution is supported on a three-dimensional regular CW-complex 
whose $f$-vector is $(1,6,9,6,2)$. 
This complex is absent the intersection property. 
Direct calculation shows that adding any single element to 
the lcm-lattice $L_I$ increases Betti numbers. Hence, there is no finite atomic lattice with the same 
Betti numbers whose Betti poset is isomorphic to $B_I$. 
Therefore, $I$ does not admit a rigid deformation. 

In this example, the key step which does not allow us to proceed along
the lines of the previous proof is that for the CW-complex supporting
the resolution, the meet-closure of the poset of cells is isomorphic to $L_I$. 
Hence, no distinct lattice $P$ exists, and we cannot construct the corresponding lattice $T$.  
\end{example}

\section{The minimal resolution of a rigid ideal}\label{S:MinResRigid}

In order to prove Theorem \ref{BettiSupportsRigid}, 
we first describe the construction found in \cite{Clark}, which 
produces a sequence of vector spaces and maps using the data 
of a finite poset $P$. For $\ell\ge 1$, the vector spaces in this sequence are 
$$
\ds \mathcal{D}_\ell
=\bigoplus_{p \in P\smsm\{\hat{0}\}}\redHo_{\ell-2}(\Delta_p,\Bbbk).
$$
In the border case, $\mathcal{D}_0=\redHo_{-1}(\{\varnothing\},\Bbbk)$, 
a one-dimensional $\Bbbk$-vector space.  
Note that the atoms (level one elements) of $P$ 
index the nontrivial components of the vector space $\mathcal{D}_1$.

These vector spaces are yoked into a sequence
$$
\mathcal{D}(P):
\cdots\ra
\mathcal{D}_\ell\stackrel{\varphi_{\ell}}{\lra}
\mathcal{D}_{\ell-1}\ra\cdots\ra
\mathcal{D}_1\stackrel{\varphi_{1}}{\lra}
\mathcal{D}_0, 
$$ 
by maps $\varphi_i$ whose structure comes from simplicial topology. 
To be precise, denote the order complex of a  
half closed interval $(\hat{0},p]$ as $\Delta_{\cl p}$. We have 
the following decomposition of the order complex of the open interval $(\hat{0},q)$
\[\Delta_q = \bigcup_{p \ld q} \Delta_{\cl{p}}.\]  

Focusing on the homological interaction between a fixed $\Delta_{\cl{p}}$ 
and the rest of $\Delta_q$, set 
\[
\Delta_{q,p} 
= \Delta_{\cl{p}} \cap \left(\bigcup_{\underset{b \neq p}{\overset{b \ld q}{}}} \Delta_{\cl{b}}\right).
\] 
and note that $\Delta_{q,p} \subset \Delta_p$.  

The maps $\varphi_i$ are defined componentwise for every $p$ using the 
connecting maps in the Mayer-Vietoris sequence for the triple 
\[
\left(\Delta_{\cl{p}},\,\,\, \bigcup_{\underset{b \neq p}{\overset{b \ld q}{}}} \Delta_{\cl{b}},\,\,\, \Delta_q\right).
\]  
Specifically, $\varphi_i: \mathcal{D}_i \rightarrow \mathcal{D}_{i-1}$ is defined as 
$\ds\varphi_i \vert_{\mathcal{D}_i, q} = \sum_{p \ld q} \varphi_i^{q,p}$. 
The component map 
$\varphi_i^{q,p} = \iota \circ \delta_{i-2}^{q,p}:\mathcal{D}_{i,q} \rightarrow \mathcal{D}_{i-1,p}$, 
is the composition of the map on homology induced by inclusion, 
$\iota: \tilde{H}_{i-3}(\Delta_{q,p},\Bbbk) \rightarrow \tilde{H}_{i-3}(\Delta_p, \Bbbk)$ with 
$\delta_{i-2}^{q,p}: \tilde{H}_{i-2}(\Delta_q,\Bbbk) \rightarrow \tilde{H}_{i-3}(\Delta_{q,p},\Bbbk)$, 
the connecting map from the Mayer-Vietoris sequence.

We proceed with the proof by first showing that $\mathcal{D}(B_M)$ is a complex when $M$ is
rigid.  

For a poset element $q\in P$, write $\ell(q)=\max\{j:p_1\lessdot p_2 \lessdot\ldots\lessdot p_j=q\}$ for the level of $q$, 
the maximum possible length of a saturated chain ending in $q$. As a consequence, $\ell(\hat{0})=0$ and 
the atoms of $P$ are of level 1. The rank of $q$ is $\rank(q)=1+\ell(q)$ and may be thought of as the maximum 
number of poset elements (including $q$) which appear within a saturated chain which ends at $q$. 
For $q\in P$, write $\mathcal{D}(P)_{\leq q}$ 
for the subsequence of $\mathcal{D}(P)$ constructed 
by using the half-closed interval $(\hat{0},q]$ with maps given as  
restrictions of the maps from $\mathcal{D}(P)$. 

\begin{definition}
Given $q \in P$ let $C$ be the set of all chains $c \in (\hat{0}, q]$ 
such that $c$ has $\ell(q)$ elements.  
The \emph{maximal ranked subposet of} $(\hat{0}, q]$ 
is the set $$\maxRanked (P,q)=\{p\in (\hat{0}, q] : p \in c \in C\},$$ 
with comparison inherited from $P$. 
\end{definition} 

\begin{remark}\label{someAreRanked}
By definition, $\maxRanked(B_M,q)$ is ranked for all $q \in B_M$.  Using 
Proposition 7.1 from the Appendix of \cite{Clark}, the sequence $\mathcal{D}(\maxRanked(B_M, q))$ is a complex.  
Moreover, in the case when $B_M$ itself is ranked, then by the same proposition, $\mathcal{D}(B_M)$ is a complex.  
\end{remark}

\begin{remark}\label{stratification2}
Note that Remark \ref{stratification} implies that for a rigid monomial ideal, 
if one records the number $i$ for which $\beta_{i,p}$ is nonzero at each
element $p$ in $\maxRanked(B_M,q)$ then the $i$ strictly
increase along chains.  Specifically, $i=0$ at the minimal element, and $i$ 
increases by one, traveling cover by cover along chains in $\maxRanked(B_M,q)$.
\end{remark}

Our goal is to use the fact in the previous remark to show that 
$\mathcal{D}(B_M)$ is a complex even when $B_M$ is not ranked.  We aim
to do this by showing that the last two maps in any multigraded strand
$\mathcal{D}(B_M)_{\leq q}$ behave exactly like the maps coming from a
ranked poset.  The following lemma shows that the free modules we
obtain by passing to $\maxRanked(B_M,q)$ will agree in the last two
spots of the complex.

\begin{lemma}\label{freeModsSameInMR}
Let $M$ is rigid and $\tilde{h}_t(\Delta_q^{B_M},k) = 1$ for a specific
$t$. For $p \in \maxRanked(B_M,q)$ such that
$\tilde{h}_i(\Delta_p^{B_M},k) = 1$ for $i = t, t-1,$ or $ t-2$
we obtain the equivalence $$\tilde{h}_i(\Delta_p^{B_M},k) =
\tilde{h}_i(\Delta_p^{\maxRanked(B_M,q)},k)$$ for all $i > t-2$.
\end{lemma}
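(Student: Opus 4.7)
The plan rests on a dimension count plus a one-case refinement. For $p \in \maxRanked(B_M,q)$ with Betti index $j_p$ (so $\tilde h_{j_p-2}(\Delta_p^{B_M}) = 1$), Remark \ref{stratification2} forces every chain in $(\hat 0, p)_{\maxRanked(B_M,q)}$ to have at most $j_p - 1$ elements, yielding $\dim \Delta_p^{\maxRanked(B_M,q)} \leq j_p - 2$. This handles every dimension $i > j_p - 2$ by vanishing on both sides. At the top dimension $i = j_p - 2$, the $i$-simplices of $\Delta_p^{B_M}$ are exactly the saturated chains from $\hat 0$ to $p$, and since $p \in \maxRanked(B_M,q)$ every such saturated chain is the bottom half of a maximal chain to $q$; consequently every element of it lies in $\maxRanked(B_M,q)$, so $C_{j_p-2}(\Delta_p^{B_M}) = C_{j_p-2}(\Delta_p^{\maxRanked(B_M,q)})$. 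Since no higher-dimensional boundaries exist on either side, the equality propagates to $\tilde h_{j_p-2}$.

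These two observations together resolve every case of the lemma except one: when $p = q$ (so $j_p = t+2$) and $i = t-1$, since now $t-1$ lies strictly inside the range where the chain modules of the two complexes can differ. Here one must still exhibit $\tilde h_{t-1}(\Delta_q^{\maxRanked(B_M,q)}) = 0$ to match the known vanishing on the $B_M$ side. The plan is to induct on $|B_M \smsm \maxRanked(B_M,q)|$ and apply the Mayer-Vietoris machinery of Lemma \ref{removeOne}: for a minimal $r \in (\hat 0, q)_{B_M} \smsm \maxRanked(B_M,q)$, the sequence associated with the triple $(\del_{\Delta_q}(r), \Star_{\Delta_q}(r), \Delta_q)$, together with the K\"unneth decomposition of
$$
\link_{\Delta_q}(r) \;=\; \Delta_r \,*\, \Delta(r,q)
$$
and the rigidity concentration of $\tilde h_\bullet(\Delta_r)$ at dimension $j_r - 2$, shows that removing $r$ changes $\tilde h_{t-1}(\Delta_q)$ only through a controlled contribution from the link.

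I expect the main obstacle to arise precisely at this boundary dimension. The naive dimension bound $\dim \Delta(r,q) \leq t - j_r$ coming from the strict Betti-index growth of Remark \ref{stratification} is just tight enough that K\"unneth permits $\tilde h_{t-1}(\link_{\Delta_q}(r))$ to be nonzero, so an additional structural input is required to force the vanishing. The plan is to leverage minimality of $r$, which guarantees $(\hat 0, r)_{B_M} \subseteq \maxRanked(B_M,q)$, together with Remark \ref{stratification2} applied to chains from $r$ up through the ranked piece, so that $\tilde h_j(\Delta(r,q))$ vanishes in the precise dimension needed. With this last numerical bound in hand, the Mayer-Vietoris sequence collapses to $\tilde h_{t-1}(\Delta_q^{B_M}) \cong \tilde h_{t-1}(\Delta_q^{B_M \smsm \{r\}})$, and the inductive hypothesis delivers $\tilde h_{t-1}(\Delta_q^{\maxRanked(B_M,q)}) = 0$, completing the proof.
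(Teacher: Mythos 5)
Your reduction of the lemma is sound and, in fact, more elementary than the paper's argument: the dimension bound $\dim\Delta_p^{\maxRanked(B_M,q)}\le j_p-2$, the identification of the top-dimensional chain groups of $\Delta_p^{B_M}$ and $\Delta_p^{\maxRanked(B_M,q)}$ (every chain with $j_p-1$ elements below $p$ extends, through $p$ and the tail of a maximal chain witnessing $p\in\maxRanked(B_M,q)$, to a chain with $\ell(q)$ elements, so it lies in the ranked piece), and the use of (R2) to see that the only unresolved case is $p=q$, $i=t-1$, are all correct. (The paper instead decomposes $\Delta_p^{B_M}=A\cup B$ with $A=\Delta_p^{\maxRanked(B_M,q)}$ and $B$ spanned by the elements of $(\hat{0},p)$ outside the ranked piece, and runs Mayer--Vietoris with dimension counts on $A\cap B$.) The genuine gap is exactly where you stop: the vanishing $\tilde H_{t-1}(\link_{\Delta_q}(r),\Bbbk)=0$, equivalently $\tilde H_{t-j_r}(\Delta(r,q),\Bbbk)=0$, is the heart of the remaining case, and you only announce that an additional structural input is required, gesturing at minimality of $r$ and Remark~\ref{stratification2}. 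That mechanism does not deliver the bound: minimality of $r$ constrains what lies \emph{below} $r$, while the dangerous homology lives in $(r,q)$, whose elements need not belong to the ranked piece at all. The missing argument is this: since $\tilde h_{j_r-2}(\Delta_r^{B_M})\neq 0$, the interval $(\hat{0},r)$ must contain a chain with $j_r-1$ elements; so if $(r,q)$ contained a chain with $t+1-j_r$ elements, concatenating the chain below $r$, then $r$, then that chain, then $q$ would give a chain in $(\hat{0},q]$ with $t+2=\ell(q)$ elements passing through $r$, forcing $r\in\maxRanked(B_M,q)$, a contradiction. Hence $\dim\Delta(r,q)\le t-j_r-1$ and K\"unneth kills $\tilde H_{t-1}$ of the link. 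Without some such step your induction has no engine.

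Two further repairs are needed to make the induction run. First, deleting \emph{minimal} non-ranked elements first spoils the bookkeeping at later steps: once some $r<r'$ has been removed, the lower interval $(\hat{0},r')$ in the current poset differs from $(\hat{0},r')_{B_M}$, so its homology need not be concentrated in one degree and your K\"unneth analysis of the next link no longer applies as stated. Either delete maximal non-ranked elements first (so lower intervals are untouched), or argue purely with dimension bounds on both join factors ($\dim\le j_{r}-2$ below and $\dim\le t-j_{r}-1$ above, using the bound just proved), which makes the deletion order irrelevant. Second, the asserted isomorphism $\tilde h_{t-1}(\Delta_q^{P})\cong\tilde h_{t-1}(\Delta_q^{P\smsm\{r\}})$ would also require $\tilde H_{t-2}$ of the link to vanish, which the improved bound does not give; fortunately you do not need it: exactness of $\tilde H_{t-1}(\link)\to\tilde H_{t-1}(\del)\to\tilde H_{t-1}(\Delta_q)$ with $\tilde H_{t-1}(\link)=0$ already propagates the vanishing $\tilde h_{t-1}=0$ from $B_M$ down to $\maxRanked(B_M,q)$, which is all the lemma asks.
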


\begin{proof}
If $p \in \maxRanked(B_M,q)$ then $p \in B_M$.  By rigidity of $M$,
let us say that $\tilde{h}_i(\Delta_p^{B_M},k) = 1$ and
$\tilde{h}_j(\Delta_p^{B_M},k) = 0$ for $j \not=i$.  This means that
the maximal chains in $(\hat{0}, p)_{B_M}$ are of length $i+1$ and
correspond to the faces forming a cycle $w_p$ which generates
$\tilde{H}_i(\Delta_p^{B_M},k)$.  As these are maximal chains and
since $p \in \maxRanked(B_M,q)$ we can see that these are the same as
the maximal chains in $(\hat{0},p)_{\maxRanked(B_M,q)}$.  Thus $w_p$
also generates homology for
$\tilde{H}_i(\Delta_p^{\maxRanked(B_M,q)},k)$.  

We will use the Mayer-Vietoris sequence, and for simplicity designate
$X = \Delta_p^{B_M}$, $A = \Delta_p^{\maxRanked(B_M,q)}$, and define
the poset $B' = \{r \in B_M \,\vert\, r \not\in \maxRanked(B_M,q),
r<p\}$ and let $B = \Delta(B')$.  By construction we see that $X = A
\cup B$.  By rigidity, we know that $\tilde{H
}_i (X,k)$ is nonzero for
  only one $i$ (which by the assumptions of the lemma is either $t,
  t-1,$ or $t-2$) and is zero for all other $i$. Moreover, we can
  also see that by rigidity that $\tilde{H}_i(B,k)$ is zero for all $i$,
  since if it were not (say it was nonzero for some $j$) then
  $\tilde{H}_i (X,k)$ would also be nonzero in homological $j$ which
    would be a contraditction (see the definition of rigidity and
    Corollary 1.3 in \cite{CMRigid}).  Thus by Mayer-Vietoris we get the
    following isomporhisms from the long exact sequence:
\begin{enumerate}
\item $\tilde{H}_i(A, k) \cong \tilde{H}_i(X,k)$ because
  $\tilde{H}_i(A\cap B, k)$ is necessiarly zero since the dimension of
  $A \cap B$ is strictly less than the dimension of $B$ which is less
  than $i$.
\item $\tilde{H}_j(A\cap B, k) \cong \tilde{H}_j (A, k)$ for $j \neq i$
  since $\tilde{H}_j(X,k)$ is zero for all $j\neq i$. 
\end{enumerate}     

In the second case we want to show that these groups are both zero for
$j = t$, or $ t-1$.  For $j > i$, both are clearly zero since the
dimension of $A$ is $i$ (regardless of how $i$ relates to $t$).  For $j < i$, we need only show that the
isomporhism holds for $j = t$, or  $t-1$.  
Here the maximal chains of $B'$ are of at most length $i$ since chains
in $(\hat{0},p)^B_M$ are of at most length $i+1$.  Moreover as
the maximal chains of $B'$ are not maximal chains of
$\maxRanked(B_M,q)$ and since $p \not\in B'$, this further limits the
maximal length of chains in $B'$ to be of at most length $i-1$.  Thus
in the simpicial complex $A\cup B$ the maximum dimension of any face
is $i-2$.  Now consider our limits on what values $i$ can take.  If $i
= t$, then the dimension of $A\cup B$ is $t-2$  and the homology
vanishes in homolgial degrees $t$ and $t-1$.  If $i = t-1$, the
dimension of $A \cup B$ is $t-3$ and so the homology vanishes in the
appropriate places.  And the same is true if $i = t-2$.  
\end{proof}

The next lemma demonstrates certain components of the maps in $\mathcal{D}(B_M)_{\leq q}$ are zero.

\begin{lemma}\label{mapsAreZero}
If $M$ is rigid and $p \in B_M$ is covered by $q$, and $p \not\in \maxRanked(B_M,q)$
then $\varphi_i^{q,p}$ in $\mathcal{D}(B_M)$ is the zero map for all $i$.
\end{lemma}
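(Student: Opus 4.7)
The plan is to exploit rigidity to identify the homological degree $i_p$ at which each $p \in B_M$ contributes a Betti number with its level $\ell_{B_M}(p)$ in the Betti poset, and then argue that the domain and codomain of $\varphi_i^{q,p}$ live in incompatible degrees $i$, forcing vanishing for every $i$.

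First, I would apply Remark~\ref{stratification2} to the maximal ranked subposet $\maxRanked(B_M, p)$ to deduce, for every $p \in B_M$, that the unique homological degree $i_p$ with $\beta_{i_p,p} \neq 0$ satisfies $i_p = \ell_{B_M}(p)$. The remark gives index $0$ at the minimal element and increments the Betti index by exactly one along each cover of any maximal ranked subposet, so the top element $p$ of $\maxRanked(B_M, p)$ carries index equal to its level in $B_M$.

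Next, I would verify that if $p \ld q$ in $B_M$ and $p \notin \maxRanked(B_M, q)$, then $\ell(p) \leq \ell(q) - 2$. Otherwise $\ell(p) = \ell(q) - 1$, and extending any length-$\ell(p)$ saturated chain ending at $p$ by the cover $p \ld q$ would produce a length-$\ell(q)$ chain ending at $q$, placing $p$ into $\maxRanked(B_M, q)$ and contradicting the hypothesis. Combined with the first step, this yields $i_p \leq i_q - 2$.

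Finally, I would unwind the definition of the sequence: $\mathcal{D}_{i,q} = \tilde{H}_{i-2}(\Delta_q, \Bbbk)$ is nonzero only in the single degree $i = i_q$, and $\mathcal{D}_{i-1,p} = \tilde{H}_{i-3}(\Delta_p, \Bbbk)$ is nonzero only in the single degree $i = i_p + 1$. For $\varphi_i^{q,p}\colon \mathcal{D}_{i,q} \to \mathcal{D}_{i-1,p}$ to have any chance of being nonzero, both spaces must be nonzero simultaneously, forcing $i_p = i_q - 1$, which the previous step rules out. Hence, for every $i$, either the source or the target of $\varphi_i^{q,p}$ is zero, and the map vanishes. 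The only mild subtlety is lifting Remark~\ref{stratification2} (originally phrased for a fixed $\maxRanked(B_M, q)$) to the global identity $i_p = \ell_{B_M}(p)$; this is handled cleanly by applying the remark to $\maxRanked(B_M, p)$, after which the lemma is just bookkeeping.
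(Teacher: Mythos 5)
Your proposal is correct and follows the same overall strategy as the paper: establish that the Betti indices of $p$ and $q$ differ by more than one, then conclude via a degree mismatch. The one place you diverge is the final step. The paper explicitly analyzes the complex $\Delta_{q,p}$, showing its dimension is at most $k-1 < l-1$ so that the codomain $\tilde{H}_{l-1}(\Delta_{q,p},\Bbbk)$ of the connecting map $\delta^{q,p}_{l}$ vanishes. You instead observe directly that the codomain $\mathcal{D}_{i-1,p}=\tilde{H}_{i-3}(\Delta_p,\Bbbk)$ of the full component map $\varphi_i^{q,p}$ vanishes whenever $\mathcal{D}_{i,q}\neq 0$, because rigidity concentrates the homology of $\Delta_p$ in the single degree $k$ and $i_q-i_p\geq 2$. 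This is a genuine shortening: one never needs to identify $\Delta_{q,p}$ with an order complex or estimate its dimension, only to note that the source and target of $\varphi_i^{q,p}$ cannot be simultaneously nonzero. Your isolation of the clean identity $i_p=\ell_{B_M}(p)$ as an intermediate lemma is also a nice way to organize what the paper establishes somewhat informally through chain-length counts; the lift of Remark~\ref{stratification2} to $\maxRanked(B_M,p)$ is sound because $p$ sits at the same level in $\maxRanked(B_M,p)$ as in $B_M$.
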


\begin{proof}
Because $p,q \in B_M$ we know that there exist $k,l \in \mathbb{Z}$ such that 
$\tilde{h}_k (\Delta_p, \Bbbk) = 1$ and $\tilde{h}_l (\Delta_q, \Bbbk) = 1$.  

First we need to show that if $p \ld q$ in $B_M$ and $p \not\in \maxRanked(B_M,q)$ then $l-k > 1$.  
Clearly $l-k \not= 0$ for if so, we would contradict rigidity
condition (R2).  Moreover by Remark \ref{stratification}, (R2) also implies that $l > k$. 

Given that $l > k$, it remains to show that $l > k + 1$.  We 
see that the subposet $(\hat{0},q]$ in $\maxRanked(B_M,q)$ must
contain a maximal chain of length $l+1$ (in fact all chains in
$\maxRanked(B_M,q)$ have the same length which is $l+1$).
So the elements $p \in B_M$ which are not elements of $\maxRanked(B_M,q)$ must lie in a chain of largest possible length 
which is less than $l+1$. Hence, $(\hat{0}, p]$ must contain a maximal chain of length $k+1$. 
Since $p \ld q$, the maximum length of a chain in $(\hat{0},q]$ which contains $p$ must be $k + 2$.  
Thus, $k+2 < l +1$, or equivalently $k + 1 < l$, as claimed. 

Now we can finally show that $\varphi_i^{q,p} = 0$.  We will do this by showing that 
$$\delta_{i-2}^{q,p}: \tilde{H}_{i-2}(\Delta_q, \Bbbk) \rightarrow \tilde{H}_{i-3}(\Delta_{q,p}, \Bbbk)$$ 
is zero for all $i$.  Since $M$ is rigid, if $\Delta_q$ only has nonzero homology 
in homological degree $l$ then we need only focus 
our attention on the case when $i-2 = l$ since otherwise the map is already zero.  Specifically, we must 
show that $\tilde{H}_{l-1} (\Delta_{q,p}, \Bbbk) = 0$.    

First, note that $\Delta_{q,p}$ can also be expressed as the order complex of the poset 
\[ (B_M)_{q,p}: = (\hat{0}, p] \cap \left(\bigcup_{\underset{p' \neq p}{\overset{p' \ld q}{}}} (\hat{0}, p'] \right).\]

Necessarily, $(B_M)_{q,p} \subset (\hat{0},p)$ because $p \not\in (\hat{0},p')$ for any such $p' \ld q$.  
This means that the length of the longest chain in $(B_M)_{q,p}$ is less than or equal to $k$ and 
furthermore, that the maximum possible dimension of a face in $\Delta_{q,p}$ is $k-1$.  So if $i > k-1$ it must be that 
$\tilde{h}_i (\Delta_{q,p}, \Bbbk) \not= 0$.  As previously argued, we know that $l >  k+1$ so 
that $k-1 < k  < l-1$.  Thus $\tilde{H}_{l-1} (\Delta_{q,p}, \Bbbk) = 0$.      
\end{proof}

Now we will show that the last two maps in $\mathcal{D}(B_M)_{\leq q}$
and $\mathcal{D}(\maxRanked(B_M, q))$ are the same.

\begin{lemma}\label{mapsActLikeRankedCase}  
The length of $\mathcal{D}(B_M)_{\leq q}$ is equal to the length of 
$\mathcal{D}(\maxRanked(B_M, q))$. Moreover, writing this common 
length as $l$, the first homological degree where the 
two sequences can possibly differ is $l-2$.
\end{lemma}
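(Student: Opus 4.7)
The plan is to identify which elements of $B_M$ and of $\maxRanked(B_M,q)$ contribute to the top two terms of each sequence, then verify that the connecting map between those terms coincides in both constructions.

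First, I determine the common length. Let $t$ be the unique integer, which exists by rigidity condition (R1) together with $q \in B_M$, such that $\tilde{h}_t(\Delta_q^{B_M}) = 1$. By Remark \ref{stratification2} the homological index strictly increases by one along covers in $\maxRanked(B_M,q)$, starting at zero, so every maximal chain of $(\hat{0}, q]$ in $\maxRanked(B_M,q)$ has $t+2$ elements; set $l = t+2$. In $\mathcal{D}(B_M)_{\le q}$, Remark \ref{stratification} forces any $p < q$ in $B_M$ to have its unique nonzero index strictly less than $t$, so $\mathcal{D}_l$ receives its only contribution from $q$, a copy of $\Bbbk$. In $\mathcal{D}(\maxRanked(B_M,q))$, Lemma \ref{freeModsSameInMR} at $p=q$ gives $\tilde{h}_t(\Delta_q^{\maxRanked(B_M,q)}) = 1$, while any $p < q$ in $\maxRanked(B_M,q)$ satisfies $\dim \Delta_p^{\maxRanked(B_M,q)} \le \ell(p)-2 < t$ and so contributes nothing in degree $l$. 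Thus both sequences have length $l$, with identical top term.

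Next I match $\mathcal{D}_{l-1}$. A summand requires an element $p$ with $\tilde{h}_{t-1}(\Delta_p) = 1$. In $B_M$, Remark \ref{stratification} forces $p \ld q$, since no integer strictly between $t-1$ and $t$ is available as the nonzero index of an intermediate element of $B_M$. Moreover, the argument inside the proof of Lemma \ref{mapsAreZero} shows that if $p \ld q$ and $p \notin \maxRanked(B_M,q)$ then $t - k \ge 2$, which rules out $k = t-1$; therefore $p \in \maxRanked(B_M,q)$. Conversely, every cover of $q$ in $\maxRanked(B_M,q)$ sits at level $t+1$ and, by Remark \ref{stratification2}, carries index $t-1$; Lemma \ref{freeModsSameInMR} with $i = t-1$ then identifies $\tilde{h}_{t-1}(\Delta_p^{\maxRanked(B_M,q)}) = \tilde{h}_{t-1}(\Delta_p^{B_M}) = 1$. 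Hence the summands of $\mathcal{D}_{l-1}$ coincide term-by-term.

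Finally, I verify that $\varphi_l$ agrees. Writing $\varphi_l = \sum_{p \ld q} \varphi_l^{q,p}$, Lemma \ref{mapsAreZero} kills every component whose indexing $p$ lies outside $\maxRanked(B_M,q)$, leaving only components indexed by covers of $q$ in $\maxRanked(B_M,q)$. For each such $p$ I unwind $\varphi_l^{q,p} = \iota \circ \delta$: the proof of Lemma \ref{freeModsSameInMR} realizes $\tilde{H}_t(\Delta_q)$ in both complexes by the same cycle $w_q$ (the sum over maximal chains of $\maxRanked(B_M,q)$ ending at $q$), and identifies $\tilde{H}_{t-1}(\Delta_p^{\maxRanked(B_M,q)})$ with $\tilde{H}_{t-1}(\Delta_p^{B_M})$ via inclusion. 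Tracking $w_q$ through the Mayer-Vietoris triple $\bigl(\Delta_{\lceil p \rceil},\, \bigcup_{b \ne p,\, b \ld q} \Delta_{\lceil b \rceil},\, \Delta_q\bigr)$ in either ambient complex produces the same component map. The main obstacle is precisely this naturality check: one must confirm that restricting the triple from $B_M$ down to $\maxRanked(B_M,q)$ leaves the class $\delta(w_q)$ unchanged in $\tilde{H}_{t-1}(\Delta_{q,p})$, after which the Lemma \ref{freeModsSameInMR} identification passes it unchanged into $\tilde{H}_{t-1}(\Delta_p)$.
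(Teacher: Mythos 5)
Your treatment of the common length, of the agreement of the modules in homological degrees $l$ and $l-1$, and of the map $\varphi_l$ follows essentially the paper's own route: the same ingredients (Remarks \ref{stratification} and \ref{stratification2}, Lemma \ref{mapsAreZero}, Lemma \ref{freeModsSameInMR}, and the fact that the maximal chains of $\maxRanked(B_M,q)$ are exactly the maximal chains of $(\hat{0},q]$) are deployed in the same way, and those parts are fine. (Your ``naturality check'' for $\varphi_l$ is flagged but not actually carried out; the paper is equally terse there, citing only Lemma \ref{mapsAreZero} together with the identification from Lemma \ref{freeModsSameInMR}, so I do not count this against you.)

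The genuine gap is that you stop one step short of what the lemma is asserting and what the paper proves: the agreement of the penultimate map $\varphi_{l-1}$ as well. Since the modules in position $l-2$ really can differ between $\mathcal{D}(B_M)_{\leq q}$ and $\mathcal{D}(\maxRanked(B_M,q))$, one must show that the extra summands of $\mathcal{D}_{l-2}$ in $\mathcal{D}(B_M)_{\leq q}$ --- those indexed by elements $p'\in B_M$ with $p'\notin\maxRanked(B_M,q)$ --- receive only zero components from position $l-1$. The paper does this by observing that such a $p'$ lies only on non-maximal chains and hence is not below any element $p\in\maxRanked(B_M,q)$ indexing a position-$(l-1)$ summand, so $\varphi_{l-1}^{p,p'}=0$; consequently every nonzero component of $\varphi_{l-1}$ is one computed inside $\maxRanked(B_M,q)$, and the last \emph{two} maps of the two sequences coincide. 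This is precisely the content used in the subsequent theorem, where $\varphi_{l-1}\circ\varphi_l$ on the summand indexed by $q$ is evaluated by passing to the ranked poset $\maxRanked(B_M,q)$ and invoking Proposition 7.1 of \cite{Clark}; with only the agreement of $\mathcal{D}_l$, $\mathcal{D}_{l-1}$ and $\varphi_l$ in hand, that reduction does not go through. You should add this final argument (or an equivalent vanishing statement for the components $\varphi_{l-1}^{p,p'}$ with $p'\notin\maxRanked(B_M,q)$) to complete the proof.
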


\begin{proof}
Denote $\maxRanked(B_M,q) - \{q\}$ as $\maxRanked(B_M, \hat{q})$ and write 
$\Delta_{\maxRanked,q}$ for the order 
complex $\Delta(\maxRanked(B_M, \hat{q}))$.  To see that the 
length of both sequences is the positive integer $l$, it suffices to show that 
$$\tilde{h}_{l-2}(\Delta_{\maxRanked,q}, \Bbbk) = \tilde{h}_{l-2}(\Delta_q, \Bbbk).$$

This follows immediately from the fact that the maximal chains of the ranked poset 
$\maxRanked(B_M, \hat{q})$ are exactly the maximal chains of $(\hat{0}, q)$, 
and that these chains correspond to the $(l-2)$-faces of the respective order complexes.

To see that the last two maps are the same in the sequences
$\mathcal{D}(B_M)_{\leq q}$  and $\mathcal{D}(\maxRanked(B_M, q))$
first observe that if $B_M = \maxRanked(B_M,q)$ then the two sequences
of maps are identical.  Thus it remains to consider the case when  $(\hat{0}, q] \subset B_M$ is not ranked.  
Lemma \ref{mapsAreZero} guarantees that $\varphi_l$ is identical in both
sequences.  Moreover, 
we know that the basis elements of $\mathcal{D}_{l-1}$ (in the
sequence $\mathcal{D}(B_M)_{\leq q}$) must correspond to elements 
$p\in \maxRanked(B_M,q)$ where $p \ld q$.  Indeed if this were not the
case consider an element $p'$
covered by $q$ in $B_M$ which is not in $\maxRanked(B_M,q)$. By Remark \ref{stratification2}, 
the Betti number $\beta_{i,p'}$ must be nonzero for some 
$i <l-1$ since $p'$ is in a chain of length less than $l+1$.  Thus, the free modules in position 
$l-1$ are the same in both sequences. 

Finally we will show that the maps $\varphi_{l-1}$ are also the same.
Note by \ref{freeModsSameInMR}, it is possible for the free modules in position $l-2$ to differ
between the two sequences, but the domain for $\varphi_{l-1}$ in
both sequences are the same.  Moreover, if there are basis elements in
position $l-2$ for the sequence $\mathcal{D}(B_M)_{\leq q}$ which are
not in $\mathcal{D}(\maxRanked(B_M, q))$ then they must correspond to
elements $p' \in B_M$ such that $p' \not \in \maxRanked(B_M, q)$.
Since $p'$ is in a non-maximal chain this means that $p'$ is not
comparable to any of the elements $p$ corresponding to basis elements
in position $l-1$ since they are all in $\maxRanked(B_M, q)$.  Thus
$\varphi_{l-1}^{p,p'} = 0 $ for all $p \in \maxRanked(B_M, q)$
corresponding to basis elements in position $l-1$.  Moreover this
implies that the only
elements in $(B_M)_{\leq q}$ contributing nonzero components of
$\varphi_{l-1}$ correspond to elements in $\maxRanked(B_M, q)$.  Thus
the maps $\varphi_{l-1}$ are the same between the two sequences.  
\end{proof}

With these lemmas in hand we now prove that
$\mathcal{D}(B_M)$ is a complex.

\begin{theorem}
$\mathcal{D}(B_M)$ is a complex.
\end{theorem}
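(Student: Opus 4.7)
The plan is to reduce the question to a componentwise check and then use the three preceding lemmas to identify the relevant composition with one taking place in a ranked poset, where the complex condition is already known.

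First I would note that, by the direct-sum decomposition of $\mathcal{D}_\ell=\bigoplus_{p}\redHo_{\ell-2}(\Delta_p,\Bbbk)$ and rigidity, every basis element of every $\mathcal{D}_\ell$ is associated with a unique poset element $p\in B_M$ with $\tilde h_{\ell-2}(\Delta_p)=1$ (indeed, $p$ admits only one homological degree carrying nonzero Betti number). Therefore it suffices to show that for every such $p$ and the corresponding $\ell$, the composition $\varphi_{\ell-1}\circ\varphi_\ell$ vanishes on the $p$-component of $\mathcal{D}_\ell$.

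Fix such a pair $(p,\ell)$ and restrict attention to the strand $\mathcal{D}(B_M)_{\le p}$. By Remark \ref{stratification2}, the nonzero components of $\varphi_\ell$ emanating from the $p$-summand can only target elements $p'\ld p$ with Betti number in homological degree $\ell-1$, and by Remark \ref{stratification2} together with Lemma \ref{mapsAreZero}, each such $p'$ lies in $\maxRanked(B_M,p)$. Thus the image of $\varphi_\ell$ on the $p$-component is supported on summands indexed by $\maxRanked(B_M,p)$, and similarly the subsequent application of $\varphi_{\ell-1}$ has its image controlled by the same subposet.

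Now I would invoke Lemma \ref{mapsActLikeRankedCase}, which tells us that the last two maps of $\mathcal{D}(B_M)_{\le p}$ coincide with the last two maps of $\mathcal{D}(\maxRanked(B_M,p))$; the free modules match in the top two positions (Lemma \ref{freeModsSameInMR}), and any discrepancy in position $\ell-2$ is invisible to the image of $\varphi_{\ell-1}$. Since $\maxRanked(B_M,p)$ is ranked by construction, Remark \ref{someAreRanked} (via Proposition 7.1 of \cite{Clark}) guarantees that $\mathcal{D}(\maxRanked(B_M,p))$ is a complex, so $\varphi_{\ell-1}\circ\varphi_\ell=0$ in that sequence, and hence also on the $p$-component in $\mathcal{D}(B_M)$.

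Because this reasoning applies to every $p\in B_M$ and every $\ell$ for which the $p$-component of $\mathcal{D}_\ell$ is nonzero, the composition $\varphi_{\ell-1}\circ\varphi_\ell$ is identically zero on $\mathcal{D}(B_M)$, proving it is a complex. The only subtle point — and the reason the preceding lemmas were needed — is that $B_M$ itself need not be ranked, so one cannot directly apply Proposition 7.1 of \cite{Clark}; the maneuver is to work on one multigraded strand at a time and pass to the ranked subposet $\maxRanked(B_M,p)$, which has the same top-two data as that strand.
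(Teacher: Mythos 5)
Your proof is correct and follows essentially the same route as the paper: reduce to a componentwise check on basis elements, restrict to the multigraded strand $\mathcal{D}(B_M)_{\le p}$, invoke Lemma \ref{mapsActLikeRankedCase} to identify the top two maps with those in $\mathcal{D}(\maxRanked(B_M,p))$, and conclude via Proposition 7.1 of \cite{Clark} since $\maxRanked(B_M,p)$ is ranked. The only cosmetic difference is that you fold the ranked case into the general argument, whereas the paper dispatches it separately at the outset.
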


\begin{proof}
First note that if $B_M$ is ranked then $\mathcal{D}(B_M)$  is a
complex by Proposition 7.1 in the Appendix of \cite{Clark}.  

Assume that $B_M$ is not ranked.  We need to show that for any element 
$a\in \mathcal{D}_i$, we have $\varphi_{i-1}(\varphi_i(a)) = 0$.  It is
sufficient to check this for basis elements.  Let $q \in B_M$
correspond to a basis element in $\mathcal{D}_i$.  To verify that 
$\varphi_{i-1}(\varphi_i(q)) = 0$, it is sufficient to check the composition 
of the maps in $\mathcal{D}(B_M)_{\leq q}$.  By Lemma
\ref{mapsActLikeRankedCase} we see that this criteria can be 
checked using the corresponding maps in $\mathcal{D}(\maxRanked(B_M, q))$.
However, since $\maxRanked(B_M,q)$ is ranked then, Proposition 7.1
of \cite{Clark} implies that $\mathcal{D}(\maxRanked(B_M, q))$ is a
complex.  Thus $\varphi_{i-1}(\varphi_i(q)) = 0$, as needed.
\end{proof}      

With $\mathcal{D}(B_M)$ shown to be a complex, it remains to verify exactness. 
In the arguments that follow, square brackets written around 
a simplicial chain always denote a homology class. 

\begin{theorem}
If $M$ is a rigid monomial ideal, then $\mathcal{D}(B_M)$ is exact.
\end{theorem}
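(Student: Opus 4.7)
The plan is to verify exactness strand by strand in the multigraded decomposition, using induction on the level $\ell(q)$ and leveraging the reduction to the ranked case afforded by the preceding lemmas. Since each $\varphi_i$ respects the multigraded structure inherited from the embedding $B_M \subset L_M$, exactness of $\mathcal{D}(B_M)$ is equivalent to exactness of every multigraded strand $\mathcal{D}(B_M)_{\leq q}$ as a complex of $\Bbbk$-vector spaces. The base case of atomic $q$ is handled directly: $\mathcal{D}(B_M)_{\leq q}$ reduces to the surjection $\mathcal{D}_1 \to \mathcal{D}_0$, which is evidently exact.

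For the inductive step with $\ell(q) > 1$, let $l$ denote the unique integer with $\tilde{h}_{l-2}(\Delta_q^{B_M}) = 1$, guaranteed by rigidity condition (R1) together with Theorem \ref{removeNC}. I would first handle the top two homological positions using Lemma \ref{mapsActLikeRankedCase}, which identifies the last two maps of $\mathcal{D}(B_M)_{\leq q}$ with those of $\mathcal{D}(\maxRanked(B_M, q))$. Since $\maxRanked(B_M, q)$ is ranked by construction, Proposition 7.1 of \cite{Clark} yields exactness of $\mathcal{D}(\maxRanked(B_M, q))$, which suffices for positions $l$ and $l-1$ of $\mathcal{D}(B_M)_{\leq q}$.

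Exactness at positions below $l-1$ is where the induction is invoked. The extra basis elements in $\mathcal{D}(B_M)_{\leq q}$ that are not present in $\mathcal{D}(\maxRanked(B_M, q))$ come from $p' < q$ with $p' \notin \maxRanked(B_M, q)$. Lemma \ref{mapsAreZero} guarantees $\varphi_l^{q, p'} = 0$ for each such $p'$, so the contribution of $q$ is entirely confined to interactions with $\maxRanked(B_M, q)$. Consequently, any cycle in $\mathcal{D}(B_M)_{\leq q}$ supported strictly below position $l-1$ is in fact a cycle in the sub-strands $\mathcal{D}(B_M)_{\leq p'}$ for appropriate $p' < q$, to which the inductive hypothesis applies.

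The main obstacle is coordinating the induction at position $l-1$, where cycles may involve basis elements drawn from both $\maxRanked(B_M, q)$ and its complement. The identification of maps via Lemma \ref{mapsActLikeRankedCase} handles the ranked contribution, but one must verify that the residual contribution decomposes cleanly into smaller strands. Rigidity is crucial throughout: conditions (R1) and (R2), together with the strict level stratification of Betti numbers recorded in Remarks \ref{stratification} and \ref{stratification2}, guarantee one-dimensional homology at each contributing element and prevent cycles from bridging the ranked top and non-ranked bottom of a strand in unexpected ways.
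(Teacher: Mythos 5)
Your plan hinges on the claim that Proposition 7.1 of \cite{Clark} gives exactness of $\mathcal{D}(\maxRanked(B_M,q))$, but that proposition only guarantees that the sequence built from a \emph{ranked} poset is a \emph{complex}. This is exactly how the paper uses it (see Remark \ref{someAreRanked}), and it is what the preceding theorem establishes for $\mathcal{D}(B_M)$. Exactness is an independent and harder statement; the ranked reduction alone does not hand it to you, and nothing in Lemmas \ref{freeModsSameInMR}--\ref{mapsActLikeRankedCase} upgrades ``complex'' to ``exact.'' With that claim withdrawn, the base of your strand-by-strand argument collapses: you have no exactness at positions $l$ and $l-1$ to propagate, and the inductive step at lower positions has nothing to stand on. You also acknowledge but do not resolve the coordination problem at position $l-1$, where cycles may mix $\maxRanked(B_M,q)$ and its complement; this is not a side issue but precisely where an actual preimage must be exhibited.

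The paper's proof takes a genuinely different and more constructive route. Given a kernel element $v=\sum c_q[w_q]$, it builds an explicit candidate preimage $u=\sum c_q\{q,w_q\}$ by coning each representing cycle over its apex $q$, then appeals to the lattice structure of $L_M$ to form the join $y=\vee q$ and argues by cases on whether $[u]$ generates homology over $y$, over some unique $x<y$, or is zero. Rigidity (R1)/(R2), together with Theorem \ref{removeNC}, is used to force uniqueness of the supporting element and to rule out degenerate cases, and a separate induction shows $B(L_M\smsm\{\hat{0}\})$ is acyclic to close the last gap. Your approach never produces a preimage and never uses the join in $L_M$, which is the mechanism that lets the paper locate one. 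If you want to salvage the strand-by-strand idea, you would at minimum need an independent proof that $\mathcal{D}(P)$ is exact whenever $P$ is a ranked Betti poset of a rigid ideal, which is not available from the cited sources and would itself require something like the paper's coning argument.
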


\begin{proof}
Write $p$ for the projective dimension of the rigid module $R/M$ 
and fix $1\leqslant i\leqslant p$.  
The construction of $\mathcal{D}(B_M)$ guarantees that 
an element $v\in\ker(\varphi_i)$ has the structure of a 
a finite sum $v=\sum c_q\cdot [w_q]$, where 
$q\in B_M$, the coefficient $c_q\in\Bbbk$ and $[w_q]$ 
is a generator for the vector space $\Ho_{i-2}(\Delta_q,\Bbbk)$. 

Since $v$ is an abstract sum of homology classes, each coming 
from a unique vector space summand, $v=0$ 
if and only if $c_q=0$ for every $q$. 
In such a scenario, $v\in\image(\varphi_{i+1})$. 
Therefore, in order to prove exactness, we must show 
that every nonzero kernel element $v$ is an element of $\image(\varphi_{i+1})$. 

\begin{comment}
However, if $v\ne 0$, then there are relations among the classes 
$[w_q]$, which will be encoded by the assumption that 
$v\in \ker(\varphi_i)$ and reflected in the application of the map 
$\varphi_{i+1}$ to an appropriate abstract sum appearing in 
higher homological degree. 
\end{comment}

Since the sum $v$ is built from the homology classes of 
$\Ho_{i-2}(\Delta^{B_M}_q,\Bbbk)$ for various $q\in B_M$, 
we appeal to the structure of $B_M$ as a subposet 
of $L_M$ and to the relationship between the simplicial homology 
of the relevant open intervals in these posets. In particular, Theorem \ref{removeNC} 
guarantees that the class $[w_q]$ also generates 
$\Ho_{i-2}(\Delta^{L_M}_q,\Bbbk)$. 

As preparation for the rest of the proof, recall the notion of a cone of a simplical complex 
taken over a disjoint apex $a$. This complex consists of simplices 
which are simplicial joins of the apex $a$ with simplices $\sigma$ of 
the original simplicial complex. We write $\{a,\sigma\}$ for the 
associated simplicial chain in a cone complex. 

Define $u=\ds\sum c_q\cdot \{q, w_q\}$ to be the finite sum 
of simplicial chains in the algebraic chain complex $\mathcal{C}(\Delta(B_{M}))$. 
The chains in this sum are created by coning the chains representing the 
classes $[w_q]$ over the apex $q$. The sum $u$ is an oriented simplicial 
chain in $\Delta(B_M)$  and the inclusion of posets $B_M\subseteq L_M$ 
guarantees that our sum is also an oriented chain of $\Delta(L_M)$. 
Since $L_M$ is a lattice, then there exists $y\in L_M$ such that 
$y=\vee q$ for those $q$ appearing in the sum.

\emph{Case 1:} 
Suppose $\Ho_{i-1}(\Delta^{L_M}_y,\Bbbk)\ne0$, so that $y\in B_M$. 
Such a $y$ covers every $q\in B_M$ which appears the 
sum $u$. Indeed, any $x\in B_M$ with the property that 
$q< x< y$ must have $\Ho_{j}(\Delta^{L_M}_x,\Bbbk)\ne0$ 
for $i-2<j<i-1$, which is impossible. 

If the class $\ds [u]=\left[\sum c_q\cdot \{q, w_q\}\right]$ 
generates $\Ho_{i-1}(\Delta^{L_M}_y,\Bbbk)$, then it certainly 
generates  $\Ho_{i-1}(\Delta^{B_M}_y,\Bbbk)$ since $[u]$ is 
defined using simplices of $\Delta(B_N)$. This is not necessarily the case 
if we take an arbitrary generator of the homology of $\Delta^{L_M}_y$. 

Applying the map $\varphi_{i+1}$ directly to $[u]$, we have 
\begin{eqnarray}\label{E:PhiActingOnU}
\varphi_{i+1}([u]) & = & \varphi_{i+1}\left(\left[\sum c_q\cdot \left\{q,w_q\right\}\right]\right)\\ 
& = & \sum c_q\cdot\varphi_{i+1}\vert_{\mathcal{D}_{i,q}}(\{q,w_q\})\nonumber\\ 
& = & \sum c_q\cdot \left(\sum_{q\lessdot y}\varphi_{i+1}^{y,q}(\{q,w_q\})\right)\nonumber\\ 
& = & \sum c_q\cdot \left(\sum_{q\lessdot y} \iota\circ\delta_{i-1}^{y,q}(\{q,w_q\})\right)\nonumber\\ 
& = & \sum c_q\cdot \left(\sum_{q\lessdot y} \left[d_{i-1}(\{q,w_q\})\right]\right)\nonumber\\
& = & \sum c_q\cdot \left(\sum_{q\lessdot y} [w_q]+[q,d_{i-2}(w_q)]\right)\nonumber\\
& = & \sum c_q\cdot \left(\sum_{q\lessdot y} [w_q]+[q,0]\right)\nonumber\\
& = & \sum c_q\cdot  [w_q]=v, \nonumber
\end{eqnarray}
where $[q,d_{i-2}(w_q)]=[q,0]$ since we assumed that 
$[w_q]$ is a cycle in $\Delta_q$. Thus, $v\in\image(\varphi_{i+1})$. 

If the class $\ds [u]=\left[\sum c_q\cdot \{q, w_q\}\right]$ 
does not generate $\Ho_{i-1}(\Delta^{L_M}_y,\Bbbk)$, then 
it certainly cannot generate $\Ho_{i-1}(\Delta^{B_M}_y,\Bbbk)$. 
There are now two possibilities. 

%\emph{Subcase 1:} The sum generates something else
If $[u]$ generates $\Ho_{j}(\Delta^{L_M}_y,\Bbbk)$ in some 
dimension $j\ne i-1$, then we have found a $y$ which 
contributes in dimensions $j$ and $i-1$, which contradicts 
rigidity condition (R1). 

%\emph{Subcase 2:} The sum generates nothing 
Suppose that $\ds [u]=\left[\sum c_q\cdot \{q, w_q\}\right]$ is 
zero in $\Ho_{i-1}(\Delta^{L_M}_y,\Bbbk)$. 
Thus, writing $\ds u=\sum c_q\cdot \{q, w_q\}$ we have 
$\ds d(u)=d\left(\sum c_q\cdot \{q, w_q\}\right)$ is the boundary of 
a simplicial chain in $\Delta^{B_M}_y\subset \Delta^{L_M}_y$. 
Calculating directly, we see that 
$$
	d(u) 
	= d\left(\sum c_q\cdot \{q, w_q\}\right)
	= \sum c_q\cdot \{w_q\}-\sum c_q\{q,d(w_q)\}
$$
must bound. We have assumed $[d(u)]=0$, 
so that passing to homology classes, 
$$
0=[d(u)]=\left[d\left(\sum c_q\cdot \{q, w_q\}\right)\right]
	= \left[\sum c_q\cdot \{w_q\}\right]-\left[\sum c_q\{q,d(w_q)\}\right]. 
$$
However $w_q$ is a homology cycle for every $q$, so $d(w_q)=0$ and therefore, 
$$
0 = [d(u)] = \left[\sum c_q\cdot \{w_q\}\right]=\sum c_q\cdot \left[w_q\right]. 
$$
Thus, we have shown that our kernel element $\ds v=\sum c_q\cdot \left[w_q\right]$ 
must be equal to zero in this subcase and therefore $v\in\image(\varphi_{i+1})$

\emph{Case 2:} 
Suppose $\Ho_{i-1}(\Delta^{L_M}_y,\Bbbk)=0$, so that $y\notin B_M$. 

If the class $\ds [u]=\left[\sum c_q\cdot \{q, w_q\}\right]$ 
generates $\Ho_{i-1}(\Delta^{L_M}_x,\Bbbk)$ for some $x$, then 
$x<y\in L_M$ and $x\in B_M$. If such an $x$ did not exist but $[u]$ generated homology 
then $y\in B_M$, contradicting our assumption that $y\notin B_M$. 

In fact, such an $x\in B_M$ which has $[u]$ as a generator for 
$\Ho_{i-1}(\Delta^{L_M}_x,\Bbbk)$ must be unique. 
To see this, suppose that $x$ and $x'$ are two elements of $B_M$ 
whose open interval homology is generated by $[u]$. 
Then $q<x$ and $q<x'$ for every $q$ in the sum. 
There are now two possibilities. First, if $x<x'$ (or $x>x'$), then 
we would have found two comparable elements in $B_M$ having nonzero 
Betti numbers in the same homological degree. This is a contradiction to 
rigidity condition (R2). 

On the other hand, if $x$ and $x'$ are incomparable, 
then within $B_M$, there exist at least two elements $a$ and $b$ such that 
 $a<x$ and $a\not<x'$, while $b<x'$ and $b\not<x$. 
(If these elements did not exist then either $x=x'$ or they are comparable.)
Since $M$ is rigid and $[u]$ has been assumed to be a nontrivial 
homology class, then any elements $a$ and $b$ which are present in the 
order complexes $\Delta_x^{B_M}$ and $\Delta_{x'}^{B_M}$ need not 
be part of any chain which generates the respective homologies. 
Indeed, were $a$ or $b$ part of such a chain, then one possibility 
is that such a chain generates homology separately from $[u]$. 
However, this contradicts rigidity condition (R1), since we assume 
that $[u]$ already generates a one-dimensional space without containing 
chains ending in $a$ or $b$. 
Alternately, if this new chain was homologous to $[u]$, 
and generated the homology of one of the order complexes 
$\Delta_x^{B_M}$ or $\Delta_{x'}^{B_M}$, but not the other, 
then we have a contradiction to our assumption that $[u]$ 
generates homology for both order complexes. 

Furthermore, $x\wedge x'\notin B_M$ when $x$ and $x'$ are incomparable 
elements of $B_M$ which are both comparable to all the $q$'s. 
Indeed, were this meet to exist in $B_M$, then a class generating 
the homology of the order complexes $\Delta_x^{B_M}$ and $\Delta_{x'}^{B_M}$ 
would not be carried by chains containing the $q$'s. 
In the lcm-lattice $L_M$, however, the meet $x\wedge x'$ must exists. 
Using the comparability relation in $L_M$, then certainly $x\wedge x'>q$ 
for every $q$ which indexes a summand in the original definition of the class $[u]$. 
However, since $x\wedge x'\notin B_M$ then by definition, 
$\Ho_{j}(\Delta^{L_M}_{x\wedge x'},\Bbbk)=0$ for every $j$. Hence, the homology of 
$\Delta_x^{L_M}$ and $\Delta_{x'}^{L_M}$ cannot be carried solely by the subcomplex 
$\Delta_{\cl{x\wedge x'}}$. However, any oriented chain which only contains 
poset chains ending in $q$'s is carried by this subcomplex. 
Thus, were $x$ and $x'$ distinct, then $[u]$ 
could not generate homology in the asserted dimension. 

Having established that when $[u]$ is a nontrivial homology cycle, it 
generates the homology of $\Delta^{B_M}_x$ for exactly one $x\in B_M$, 
we now apply the map $\varphi_{i+1}$. This calculation is similar to 
the one detailed in (\ref{E:PhiActingOnU}), where the covering element 
in this case is $x$. The associated equation implies that $v\in\image(\varphi_{i+1})$. 

Our final possibility is that the class $[u]$ does not generate $\Ho_{i-1}(\Delta^{B_M}_x,\Bbbk)$ 
for any $x\in B_M$. Together with the assumption that the $[w_q]$ generate 
homology in dimension $i-2$, rigidity guarantees that the class $[u]$ cannot generate 
homology in any dimension $j\ne i-1$. Hence, either $[u]=0$, or $[u]$ 
generates $i-1$ dimensional homology of $\Delta(B_M)$ since the elements $q$ would be 
maximal in $B_M$. The former case implies that $v=0$, which was argued 
in Case 1.  
We claim that the latter case is impossible due to the fact that 
for any poset $P$ with a maximum element, the Betti poset $B(P\smsm\{\hat{0}\})$ is acyclic.

Towards verification of this claim, write $\hat{1}$ for the maximum element of $P$. 
If $\hat{1}\in B(P)$, then the order complex 
$\Delta(B(P\smsm\{\hat{0}\}))$ is a cone with apex $\hat{1}$, and is acyclic. 

On the other hand, if $\hat{1}\notin B(P)$, then 
we proceed by induction on the number of non-contributing elements of $P$. 
If $\hat{1}$ is the unique non-contributing element of $P$, then $B(P\smsm\{\hat{0}\})$ 
must be acyclic. Indeed, if we assume otherwise, the equality 
$B(P\smsm\{\hat{0}\})=(\hat{0},\hat{1})\subset P$ would imply that the order complex 
$\Delta(\hat{0},\hat{1})=\Delta(B(P\smsm\{\hat{0}\}))$ had nontrivial homology, 
contradicting our assumption that $\hat{1}$ is a non-contributor. 

Suppose that $k>1$ and that for any poset $P'$ with $k$ non-contributors, the Betti poset  
$B(P'\smsm\{\hat{0}\})$ is acyclic. Let $P$ be a poset with $k+1$ non-contributors. 
For any non-contributor $x\ne\hat{1}$, Corollary \ref{SameBettis} guarantees 
an equality of Betti posets $B(P\smsm\{\hat{0}\}))=B(P\smsm\{\hat{0},x\})$. By the inductive hypothesis, 
$P\smsm\{x\}$ has $k$ non-contributors, so that $B(P\smsm\{\hat{0},x\})$ is acyclic. 
Hence, $B(P\smsm\{\hat{0}\}))$ is acyclic as claimed. 
Since an lcm-lattice $L_M$ has a maximum element, then $B(L_M\smsm\{\hat{0}\})$ is acyclic. 

This completes the proof of exactness. 
\end{proof}

\end{document}